\newtheorem{thm}{Theorem}
\newtheorem{crl}{Corollary}
\newtheorem{prop}{Proposition}
\newtheorem{lem}{Lemma}
\newtheorem{cnj}{Conjecture}
\newtheorem{defn}{Definition}
\newtheorem{ex}{Example}
\newtheorem{rem}{Remark}
\newcommand{\bin}[2]{\genfrac[]{0pt}1{#1}{#2}^{\vphantom{A}}}
\newcommand{\sdp}[2]{\Sigma_{#1}({#2})}
\title{Weight Multiplicity Polynomials  \\ of multi-variable Weyl Modules}
\author{S.Loktev}
\address{Institute for Theoretical and Experimental Physics, Moscow 117218 Russia}
\email{loktev@itep.ru}
\subjclass[2000]{17B65, 17B10}
\keywords{current algebra, Weyl module}
\dedicatory{To Pierre Deligne on the occasion of his 65th birthday}
\begin{document}
\maketitle

\def\theenumi{\roman{enumi}}
\def\labelenumi{(\theenumi)}

\def \ch {{\rm ch}}
\def \gr {{\rm gr}\,}
\def \g  {{\mathfrak g}}
\def \b  {{\mathfrak b}}
\def \h  {{\mathfrak h}}
\def \a  {{\mathfrak a}}
\def \rkg {{{\rm rk}(\g)}}
\def \gs {{\mathfrak sl}}
\def \gl {{\mathfrak gl}}
\def \CC {{\mathbb C}}
\def \QQ {{\mathbb Q}}
\def \NN {{\mathbb N}}
\def \cpf {\CC{\rm PF}}
\def \da {{\mathcal A}}
\def \prt {\partial/\partial t}
\def \sgr {{\rm Sign}}
\def \v {{\bf m}}
\def \ve {\varepsilon}

\begin{abstract}
This paper is based on the observation that dimension of weight spaces of multi-variable Weyl modules
depends polynomially on the highest weight (Conjecture~\ref{cmain}). We support this conjecture
by various explicit answers for up to three variable cases and discuss the underlying combinatorics.
\end{abstract}

\section*{Introduction}

Representations of one-variable current algebras are now well studied and have a lot of applications
to algebraic geometry and mathematical physics. Also there are several approaches
to develop a similar theory for multi-variable currents. Let us briefly outline some  of them.

The first one is an affinization of affine Kac-Moody algebras (that is studying
Laurent polynomials in one variable with values in an affine Lie algebra). 
It provides a lot of structures, in particular,
the universal central extension and a canonical way to quantize such algebras, 
but makes sense only for the two-variable case.

The second approach starts from multi-variable, usually toroidal settings (that is studying
Laurent polynomials in several variables with values in a finite-dimensional Lie algebra). It is more universal,
but very few known notions and methods works in this generality, so other tools should
be used (see e.g. \cite{MRY}).

The third one is adding a variable using the fusion product introduced in \cite{FL1}. Namely,
for a set of representations of a Lie algebra $\a$ it produces a graded action 
of the current algebra $\a \otimes \CC[t]$ on their tensor product (see \cite{FL1}, \cite{FF1} how it
works for $\a = sl_2$ and \cite{FF2} for $\a = \widehat{sl_2})$.

Recently we were faced with a rather simply stated question which appeared to be related to fusion product 
in the one variable case, related to double affine algebras for two variables, promising for three variables and
pretty challenging by now for a higher number.

Suppose we have even the simplest Lie algebra $sl_2$ with the standard basis $e$, $h$, $f$. Consider
the Lie algebra of $sl_2$-valued polynomials $sl_2 \otimes \CC[x^1, \dots, x^d]$.
Let us follow the most naive notion of highest weight vector, namely let $e \otimes P$
acts on it by zero, and  $h\otimes P$ acts by the scalar $nP(0)$ for each polynomial $P$.
In order to obtain a finite-dimensional module let $n$ be a non-negative integer.
Following \cite{CP} introduce $W_{n}$ to be the module, 
generated by a highest weight vector $v$ with the additional integrability
relation $(f\otimes 1)^{n+1}v=0$. Then it appears (see \cite{FL2}) that $W_{n}$ is finite-dimensional
and that there are some pretty formulas for its dimension and dimensions of weight spaces.

The next table is organized as follows. First column indicates the number of variables.
Second column contains the dimensions of Weyl modules together with the number of
the sequence in the On-Line Encyclopedia of Integer Sequences \cite{Sl}. In the third column
there are the dimensions of weight spaces organized as polynomials. Last column
provides a reference.

\medskip

\noindent
\begin{tabular}{|c|c|c|c|}
\hline
$d$ & $\dim W_{n}$ & $\dim W_{n}^{n\omega - k \alpha} = P(n)/P(k)$ & reference \cr
\hline\hline
$0$ & $n+1$ & $1$ for $0\le k \le n$ & well known \cr 
\hline
$1$ & $2^n$ & 
{\bf Binomial Coefficients} $\bin{n}{k}$ & \cite{CP2} \cr
&
& 
$P(n) = \prod\limits_{i=0}^{k-1}(n-i)$ &  \cr
\hline
$2$ & {\bf Catalan Number}  & {\bf Narayana Numbers} & \cite{FL2}\cr
& $ \left.\bin{2n+2}{n}\right/ (n+1)$ & 
$\left. \bin{n+1}{k}\bin{n+1}{n-k}\right/ (n+1)$ & using  \cr
& A000108 
& $P(n) = \prod\limits_{i=-1}^{k-2}(n-i)\prod\limits_{i=0}^{k-1}(n-i)$ & 
\cite{H} \cr
\hline
$3$ & $\left.\bin{3n+3}{n}\right/\bin{n+2}{2}$ & $\frac{\bin{n+k+2}{2k+1}\bin{2n-k+1}{k}}{(k+1)(n+k+2)}^{\vphantom{s}}$
 & Conjec- \cr
& A000139 \rule[-0.7cm]{0cm}{1.6cm} & $P(n) = \prod\limits_{i=2}^{2k+1}(n+i-k) \prod\limits_{i=-1}^{k-2} (2n-i-k)$ & 
 ture~\ref{triag}\cr
\hline
$\ge 4$  & Huge prime factors & $P(n)$ is not factorizable /${\mathbb Q}$ & not known \cr
\hline
\end{tabular}

\bigskip

Note that in this table for a fixed $k$ the dimensions of $W_{n}^{n\omega - k \alpha}$ 
are values of a polynomial in $n$. It seems to be a general phenomenon and the paper is 
build around this polynomiality conjecture. In Section 1 we introduce the notation, discuss Weyl modules in full generality and state the polynomiality conjecture (Conjecture~\ref{cmain}). 
In Section 2 we produce several explicit answers and
support this conjecture in various particular cases (up to three variables). 

{\bf Acknowledgements.}
 It is a pleasure to dedicate this paper to the 65-th birthday of Pierre Deligne, who discovered
many wonderful results, in particular, a polynomial extrapolation of representations \cite{D1},
\cite{D}.
I am very grateful to A.Berenstein, P.Etingof, B.Feigin, V.Ostrik, A.Postnikov
for very useful and stimulating discussions. Part of this work
was done during a visit to MSRI and UC Riverside. The author was partially
supported by RF President Grant N.Sh-3035.2008.2, grants RFBR-08-02-00287, RFBR-CNRS-07-01-92214, 
RFBR-IND-08-01-91300 and P.Deligne 2004 Balzan prize in mathematics.

\section{Multi-variable Weyl modules}

\subsection{Notation}

Let $\g$ be a reductive Lie algebra. By $\b$ and $\h$ denote its Borel and Cartan subalgebras, by
$R$ and $R_+$ denote the sets of roots and positive roots, by $\alpha_i$, $i=1\dots \rkg$, denote simple roots,
by $\omega_i$ denote simple weights,
by $Q$ and  $Q^+$ denote the root lattice and its positive cone, by $P$ and $P^+$ denote
the weight lattice and the dominant weight cone. 

For $\g = \gl_r$ by $\ve_i$, $i=1\dots r$, denote
the standard basis in the weight space. Then partitions
$\xi = (\xi_1 \ge \dots \ge \xi_r)$ correspond to 
dominant weights $\sum \xi_i \ve_i$ of  $\gl_r$. 
For each $\xi$ by $\xi^t$ denote the {\em transposed} partition
$\xi^t_j = |\{i | \xi_i \ge j\}|$ corresponding to the reflected Young diagram.

For a representation $U$ by $U^\mu$ denote the corresponding weight space, that is the common
eigenspace of $\h$ where $h \in \h$ acts by the scalar $\mu(h)$.

In addition let $A$ be a commutative finitely-generated algebra with a unit $1$ and 
a co-unit (augmentation) $\epsilon: A \to \CC$. 
Note that $A$ can be treated as the algebra of functions on an affine scheme $M$, 
so $\epsilon$ is the evaluation at a certain closed point $p$ of $M$.

By $A_\epsilon$ denote the augmentation ideal, that is, the kernel of $\epsilon$. 
Note that the infinitesimal neighborhood spaces $A/A_\epsilon^n$ are always finite-dimensional.

\subsection{Highest weight modules}

Here we study finite-dimensional representations of $\g \otimes A$, that is, the Lie algebra
of $\g$-valued functions on the scheme $M$.

\begin{defn}
Let $U$ be a representation of $\g \otimes A$.
We say that a vector $v_{\lambda} \in U$ is a {\em highest weight vector}
of weight $\lambda \in \h^*$ if
$$(g \otimes P) v_{\lambda} = \lambda(g) \epsilon(P) v_{\lambda}\quad \mbox{for}\ g \in \b,\ P \in A.$$
\end{defn}

\begin{thm}\cite{FL2}\label{wex}
\begin{enumerate}
\item There exists a universal finite-dimensional module $W^{A}_\epsilon(\lambda)$, 
such that any finite-dimensional module
generated by $ v_{\lambda}$ is a quotient of $W^A_\epsilon(\lambda)$.

\item We have  $W^A_\epsilon(\lambda)\ne 0$ if and only if $\lambda \in P^+$.

\item For any $\lambda$ there exists $N$ such that $\g \otimes A_\epsilon^N$ acts on
$W^A_\epsilon(\lambda)$ by zero.

\item We have $W^A_\epsilon(\lambda) \cong  \oplus_\mu W^A_\epsilon(\lambda)^\mu$, and 
$W^A_\epsilon(\lambda)^\mu \ne 0$ if and only if
$w(\mu) - \lambda \in Q^+$ for any element $w$ of the Weyl group.

\item Any finite-dimensional module, generated by a common eigenvector of $\b \otimes A$,
is a quotient of $W^A_{\epsilon_1}(\lambda^1) \otimes \dots \otimes W^A_{\epsilon_k}(\lambda^k)$ 
for some $\lambda^i$, $\epsilon_i$.
\end{enumerate}
\end{thm}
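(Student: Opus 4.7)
The plan is to construct $W^A_\epsilon(\lambda)$ as the quotient of the induced module $U(\g\otimes A)\otimes_{U(\b\otimes A)}\CC v_\lambda$, where $\b\otimes A$ acts on $\CC v_\lambda$ by $(g\otimes P)v_\lambda=\lambda(g)\epsilon(P)v_\lambda$ for $g\in\b$, modulo the $\gs_2$-integrability relations $(f_i\otimes 1)^{\langle\lambda,\alpha_i^\vee\rangle+1}v_\lambda=0$ imposed at each simple root $\alpha_i$ when $\lambda\in P^+$. Every finite-dimensional module generated by a highest weight vector of weight $\lambda$ must satisfy these relations, so it is automatically a quotient of $W^A_\epsilon(\lambda)$; thus (i) is reduced to showing that this quotient is itself finite-dimensional. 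The weight decomposition in (iv) is immediate because $\h\subset\g\otimes 1$ acts semisimply with integral weights; and since each simple $\gs_2$-triple now acts locally nilpotently by construction, $W^A_\epsilon(\lambda)$ is $\g$-integrable as a $\g\otimes 1$-module, so its weight support obeys the standard restriction for integrable highest weight modules. For (ii), the "if" direction uses the evaluation morphism $\g\otimes A\to\g$ induced by $\epsilon$ to pull back the irreducible $V(\lambda)$, producing a nonzero quotient; the "only if" direction follows because nonintegrality of some $\langle\lambda,\alpha_i^\vee\rangle$ forces $v_\lambda$ itself to vanish.

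The principal obstacle is (iii), from which finite-dimensionality in (i) will then follow. I would argue inductively, using the bracket $[g_1\otimes P,g_2\otimes Q]=[g_1,g_2]\otimes PQ$ together with the triangular decomposition of $\g$, that sufficiently deep elements of $\g\otimes A_\epsilon^N$ annihilate the highest weight vector and hence (by propagating across generating monomials, exploiting the $\g$-integrability to move such operators to the top) annihilate all of $W^A_\epsilon(\lambda)$. The finite-dimensional $\g$-isotypic components of $W^A_\epsilon(\lambda)$ are each generated by their own highest weight vectors, so it suffices to bound the action on those, which in turn is controlled by a PBW-type argument against $A/A_\epsilon^N$ for large $N$. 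Once $\g\otimes A_\epsilon^N$ acts trivially, $W^A_\epsilon(\lambda)$ becomes a highest weight module over the \emph{finite-dimensional} Lie algebra $\g\otimes(A/A_\epsilon^N)$, and classical arguments yield finite-dimensionality.

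For (v), the eigenvector $v$ defines a character $\chi:\b\otimes A\to\CC$ which annihilates the nilpotent radical (as it lies in $[\b\otimes A,\b\otimes A]$) and is therefore determined by its restriction to $\h\otimes A$. The identity $[e_i\otimes P,f_i\otimes Q]v=\chi(h_i\otimes PQ)v$, combined with finite-dimensionality of the module generated by $v$, forces the associated bilinear pairing on $A$ to factor through $A/J$ for some finite-codimension ideal $J\subset A$. The Chinese Remainder Theorem then splits $A/J\cong\prod_{i=1}^k A/J_i$ over finitely many distinct closed points $p_1,\dots,p_k$, yielding local highest weight data $(\lambda^i,\epsilon_i)$ such that the canonical highest weight vector of $W^A_{\epsilon_1}(\lambda^1)\otimes\cdots\otimes W^A_{\epsilon_k}(\lambda^k)$ carries the same $\b\otimes A$-character as $v$. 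Universality from (i), applied factor by factor, then produces the required surjection onto the module generated by $v$.
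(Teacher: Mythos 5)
The paper does not prove this theorem at all: it is imported verbatim from \cite{FL2}, so there is no internal argument to compare yours against. Judged on its own merits, your outline reproduces the standard architecture of the Chari--Pressley/Feigin--Loktev proof (universal object defined by highest-weight plus integrability relations, evaluation modules for (ii), reduction of finite-dimensionality to (iii), localization for (v)), but the two load-bearing steps are asserted rather than proved, and they are precisely where all the work in \cite{CP} and \cite{FL2} lives.

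For (iii), the phrase ``sufficiently deep elements of $\g\otimes A_\epsilon^N$ annihilate the highest weight vector'' is the entire content of the theorem, and your sketch gives no mechanism for it. The needed ingredient is a Garland/Chari--Pressley type identity: expanding $(e_i\otimes P)^{(s)}(f_i\otimes 1)^{(r+s)}v_\lambda=0$ produces relations expressing $(f_i\otimes P^N)v_\lambda$, for $N>\langle\lambda,\alpha_i^\vee\rangle$, through $(f_i\otimes P^k)v_\lambda$ with smaller $k$; iterating over positive roots shows that each $f_\alpha\otimes A$ acts on $v_\lambda$ through a finite-dimensional quotient $A/A_\epsilon^{N}$, and only then does a PBW spanning argument close the loop. (Once $\g\otimes A_\epsilon^N$ is known to kill $v_\lambda$, the propagation you worry about is immediate: $A_\epsilon^N$ is an ideal of $A$, hence $\g\otimes A_\epsilon^N$ is an ideal of $\g\otimes A$, and an ideal annihilating a cyclic generator annihilates the whole cyclic module --- no ``moving operators to the top'' is required.) For (v) there is a similar gap: it is not enough that the character of $\h\otimes A$ factors through a finite-codimensional ideal $J$ and that $A/J$ splits by the Chinese Remainder Theorem; you must show the character is a finite sum $\sum_i\lambda^i(h)\epsilon_i(P)$ of \emph{point evaluations} twisted by dominant weights, i.e.\ that it kills $\h\otimes\mathfrak{m}_{p_i}$ on each local factor. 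That again comes from the $\gs_2$ integrability constraints on the generating series $\sum_k\chi(h_i\otimes P^k)z^k$, not from linear algebra on $A/J$. Your statements (i), (ii), (iv) are handled correctly modulo these inputs.
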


\begin{defn}\cite{CP}\cite{FL2}
The module $W^A_\epsilon(\lambda)$ is called {\em Weyl module}. For $A= \CC[x^1, \dots, x^d]$ and
$\epsilon(P) = P(0)$ let us denote it by $W^d(\lambda)$ as well.
\end{defn}

\begin{prop}
\begin{enumerate}
\item For each homomorphism $f: A_1 \to A_2$ we have a natural map 
$f^*: W^{A_1}_{\epsilon \circ f}(\lambda) \to W^{A_2}_{\epsilon}(\lambda)$.
\item If $\epsilon$ corresponds to a non-singular point of $M$ then we have
$W^A_\epsilon(\lambda) \cong W^d(\lambda)$, where $d=\dim M$.
\end{enumerate}
\end{prop}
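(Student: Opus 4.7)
The plan is to prove (i) directly from the universal property in Theorem~\ref{wex}(i), and then exploit (i) together with Theorem~\ref{wex}(iii) in part (ii) to reduce the isomorphism $W^A_\epsilon(\lambda)\cong W^d(\lambda)$ to a comparison of the infinitesimal neighborhood algebras $A/A_\epsilon^N$ and $\CC[x^1,\dots,x^d]/(x^1,\dots,x^d)^N$. The key intermediate claim is that $W^A_\epsilon(\lambda)$ depends only on the truncated algebra $A/A_\epsilon^N$ for $N$ sufficiently large; once this is in hand, non-singularity of $p$ does the rest.

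For (i), a ring homomorphism $f\colon A_1\to A_2$ induces a Lie algebra homomorphism $\mathrm{id}_\g\otimes f\colon \g\otimes A_1\to \g\otimes A_2$, so $W^{A_2}_\epsilon(\lambda)$ becomes a $\g\otimes A_1$-module via pullback. I would check that the highest weight vector $v_\lambda\in W^{A_2}_\epsilon(\lambda)$ satisfies $(g\otimes P)v_\lambda=\lambda(g)\epsilon(f(P))v_\lambda=\lambda(g)(\epsilon\circ f)(P)v_\lambda$ for $g\in\b$, $P\in A_1$, so $v_\lambda$ is a highest weight vector of weight $\lambda$ for the $\g\otimes A_1$-action with augmentation $\epsilon\circ f$. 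The $\g\otimes A_1$-submodule of $W^{A_2}_\epsilon(\lambda)$ generated by $v_\lambda$ is then a finite-dimensional highest weight module, hence by Theorem~\ref{wex}(i) a quotient of $W^{A_1}_{\epsilon\circ f}(\lambda)$, which is the required natural map.

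For (ii), I would first establish the infinitesimal reduction. By Theorem~\ref{wex}(iii) pick $N$ so that $\g\otimes A_\epsilon^N$ annihilates $W^A_\epsilon(\lambda)$. Let $\bar A=A/A_\epsilon^N$ and $\bar\epsilon$ the induced augmentation. Then $W^A_\epsilon(\lambda)$ carries a $\g\otimes \bar A$-module structure with $v_\lambda$ a highest weight vector, so by universality it is a quotient of $W^{\bar A}_{\bar\epsilon}(\lambda)$; applying (i) to the quotient map $\pi\colon A\to \bar A$ (noting $\bar\epsilon\circ\pi=\epsilon$) provides a surjection in the opposite direction. The two surjections compose to give the identity on the highest weight vector, hence are mutually inverse isomorphisms, so $W^A_\epsilon(\lambda)\cong W^{\bar A}_{\bar\epsilon}(\lambda)$. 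The same argument yields $W^d(\lambda)\cong W^{\CC[x]/(x)^N}_{\bar\epsilon}(\lambda)$.

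Finally, I would use non-singularity of $p$: since $\dim_\CC A_\epsilon/A_\epsilon^2=d$, lifting a basis produces elements $y^1,\dots,y^d\in A_\epsilon$ yielding a ring homomorphism $\CC[x^1,\dots,x^d]\to A$, $x^i\mapsto y^i$, and regularity of the local ring at $p$ implies that the induced map $\CC[x]/(x)^N\to A/A_\epsilon^N$ is an isomorphism for every $N$. Applying (i) to this isomorphism (in either direction) transfers $W^{\bar A}_{\bar\epsilon}(\lambda)\cong W^{\CC[x]/(x)^N}_{\bar\epsilon}(\lambda)$, and combining with the previous step yields $W^A_\epsilon(\lambda)\cong W^d(\lambda)$. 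The main obstacle is the infinitesimal reduction step: showing that the Weyl module is insensitive to global geometry away from $p$ requires using the universal property twice, once through part (i) of the proposition and once directly through Theorem~\ref{wex}(i), and relies crucially on the nontrivial finiteness statement (iii). Once that is granted, smoothness provides the desired local isomorphism essentially for free.
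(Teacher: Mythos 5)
Your proposal is correct and follows essentially the same route as the paper: part (i) via pullback along $\mathrm{id}_\g\otimes f$ and the universal property, and part (ii) by using Theorem~\ref{wex}(iii) to reduce to the truncations $A/A_\epsilon^N$, which coincide with those of $\CC[x^1,\dots,x^d]$ at a non-singular point. You merely spell out in more detail the mutually inverse surjections that the paper leaves implicit.
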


\begin{proof}
For (i) note that due to the map $f$ there is an action of $\g \otimes A_1$ on $W^{A_2}_{\epsilon}(\lambda)$,
and the $\g \otimes A_1$-submodule, generated by the highest weight vector, is a quotient
of $W^{A_1}_{\epsilon \circ f}(\lambda)$.

For (ii) note that for
$\epsilon$ corresponding to a non-singular point
the quotient $A/ A_\epsilon^N$ is isomorphic to the same factor for $\CC[x^1, \dots, x^d]$, so
by Theorem~\ref{wex}~(iii) the Weyl modules are isomorphic.
\end{proof}

In other words, characters of Weyl modules can be considered as a functorial invariant of algebraic singularities.

\begin{cnj}\label{cmain}
Let $\lambda = \sum \lambda_i \omega_i$. Fix $\mu = \sum \mu^i \alpha_i$.
\begin{enumerate}
\item
For $d>0$ we have $\dim W^d(\lambda)^{\lambda-\mu}$
is a polynomial in $\lambda_1, \dots, \lambda_\rkg$ of degree $\mu^i d$ in the variable $\lambda_i$,
$i=1\dots \rkg$.
\item 
Even for a singular point there exists $\nu$, such that for $\lambda - \nu \in P^+$ we have 
$\dim W^A_\epsilon(\lambda)^{\lambda-\mu}$
is a polynomial in $\lambda_1, \dots, \lambda_\rkg$ of degree $\mu^i d$ in the variable $\lambda_i$,
$i=1\dots \rkg$, where $d=\dim M$.
\end{enumerate}
\end{cnj}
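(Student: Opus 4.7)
The plan is to prove polynomiality via a PBW-type spanning set on $W^d(\lambda)^{\lambda-\mu}$. For fixed $\mu = \sum \mu^i \alpha_i \in Q^+$, this weight space is spanned by images of ordered monomials $(f_{\beta_1}\otimes x^{I_1})\cdots (f_{\beta_r}\otimes x^{I_r})\, v_\lambda$ with $\beta_1+\cdots+\beta_r = \mu$, $\beta_j \in R_+$, and $I_j \in \NN^d$. The number of admissible root sequences is finite and independent of $\lambda$, so the task is to control the polynomial indices $I_j$ modulo the defining relations of the Weyl module. By Theorem~\ref{wex}(i) the total dimension is finite, so this is a rank computation for a matrix whose entries depend on $\lambda$ only through the integrability cut-offs and the depth $N$ of Theorem~\ref{wex}(iii).

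The central step is to reduce this to a counting problem that is manifestly polynomial. I would introduce a secondary $\NN^\rkg$-grading on spanning monomials that separates the contributions of each simple root $\alpha_i$ and each of the $d$ polynomial variables. The integrability relation $(f_{\alpha_i}\otimes 1)^{\lambda_i+1} v_\lambda = 0$, combined with the Serre relations and the commutators in $U(\g \otimes A)$, then translates into a cut-off on one such counter at value $\lambda_i + 1$. The resulting enumeration problem — counting admissible monomials subject to $\rkg$ such cut-offs — is polynomial of degree $\mu^i \cdot d$ in $\lambda_i$: each of the $\mu^i$ occurrences of $\alpha_i$ paired with each of the $d$ variables $x^k$ contributes one linear factor in $\lambda_i$. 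For the upper bound one can use the fusion-product filtration of \cite{FL1}, which realizes $\gr W^d(\lambda)$ as a quotient of a tensor product of simpler Weyl modules whose weight characters are visibly polynomial of the right degree.

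For the matching lower bound one must exhibit sufficiently many linearly independent vectors; the natural route is a geometric model, for instance global sections of an equivariant line bundle on a multi-variable analogue of the semi-infinite flag variety. Part~(ii) should then follow via a flat degeneration argument: for any closed point of a $d$-dimensional scheme the Hilbert function of $A/A_\epsilon^N$ eventually coincides with that of $\CC[x^1,\dots,x^d]/(x)^N$, and once $\lambda$ is deep enough in the dominant cone to be insensitive to the finitely many initial discrepancies (which is the meaning of the shift $\nu$), $W^A_\epsilon(\lambda)$ inherits polynomiality from $W^d(\lambda)$. The principal obstacle will be the lower bound in part~(i): general rigidity arguments readily yield polynomiality of dimensions, but matching the predicted degree $\mu^i d$ separately in each $\lambda_i$ requires new input. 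The striking Narayana-type formulas in the $d \le 3$ rows of the table strongly suggest the existence of a beautiful combinatorial model — perhaps involving parking functions or lattice paths — whose identification appears to be the heart of the problem.
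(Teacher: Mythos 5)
The statement you are proving is Conjecture~\ref{cmain}: the paper does not prove it, and offers only supporting evidence in low-dimensional cases (Corollary~\ref{pold1} for $d=1$ via the tensor decomposition of Theorem~\ref{tns}, Corollary~\ref{pol2} for $d=2$ via parking-function combinatorics and the recurrence of Theorem~\ref{rec}, and a conjectural formula for $d=3$). Your proposal is likewise not a proof but a strategy sketch, and several of its load-bearing steps are either unsupported or incorrect. The central assertion that the enumeration of PBW monomials ``subject to $\rkg$ cut-offs'' is \emph{manifestly} polynomial of degree $\mu^i d$ in $\lambda_i$ conflates the size of a spanning set with the dimension of the weight space: the spanning monomials satisfy many relations coming from Serre relations and from the action of $S^n(A)_\epsilon$, and controlling the rank of that relation matrix as a function of $\lambda$ is precisely the open problem, not a bookkeeping step. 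Your proposed upper bound via ``the fusion-product filtration of \cite{FL1}'' also fails for $d\ge 2$: the fusion product adds only one variable, and no tensor-product decomposition of $W^d(\lambda)$ into fundamental pieces can exist for $d\ge 2$, since already for $sl_2$ one has $\dim W^2(n\omega)=\bin{2n+2}{n}/(n+1)$, which is not multiplicative in $n$.

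For part (ii) your degeneration argument proves too much. The passage from a singular point to a smooth one is not flat at the level of Weyl modules: the paper's functoriality statement gives only a map $W^{A_1}_{\epsilon\circ f}(\lambda)\to W^{A_2}_\epsilon(\lambda)$, and dimensions genuinely jump. Kuwabara's double-point example in the paper ($A=\CC[x,y]/xy$, a one-dimensional singular scheme) gives $W^A_\epsilon(n\omega_1)\cong V^{\otimes n}\oplus(n-1)\bigl(\wedge^2V\otimes V^{\otimes n-2}\bigr)$, whose weight multiplicities are polynomials \emph{different} from the multinomial coefficients of the smooth case $W^1(n\omega_1)$ --- only the degree agrees, which is exactly what the conjecture predicts and no more. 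So $W^A_\epsilon(\lambda)$ cannot simply ``inherit polynomiality from $W^d(\lambda)$''; the shift by $\nu$ does not repair this, because the discrepancy persists for all large $\lambda$ rather than only for finitely many initial values. You correctly identify that the lower bound and the combinatorial model are the heart of the matter, but as written the proposal establishes neither the polynomiality nor the degree bound in either part, and in the $d=1$ and $d=2$ cases where results are actually available, the paper's route (tensor/fusion decomposition for $d=1$; parking functions, the cyclic-permutation Lemma~\ref{Reni}, and the recurrence of Theorem~\ref{rec} for $d=2$) is quite different from what you outline.
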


\begin{ex} Let $\g = sl_2$, $\mu = \alpha$. Then $W^A_\epsilon(n\omega)^{n\omega-\alpha} \cong A / A_\epsilon^n$ 
(see \cite{FL2}). 
And for a big enough $n$ the integers  
$\dim A / A_\epsilon^n - \dim A/ A_\epsilon^{n+1} = \dim  A_\epsilon^n/ A_\epsilon^{n+1}$
are values of the Hilbert polynomial for the completion
of $A$ at $\epsilon$.
\end{ex}

In this paper we support and refine this conjecture in several particular cases.

\subsection{Schur-Weyl duality}

First let us describe the classical construction.
Let $V$ be the $r$-dimensional vector representation of
$\gl_r$. For the classical settings note that $V^{\otimes n}$ inherits both the
action of the Lie algebra $\gl_r$ and the symmetric group $\Sigma_n$, and that
these actions commute. Decomposition of this bi-module provides a reciprocity 
between the representations of these objects and it can be described as the 
following functor.

\begin{defn}
Let $\pi$ be a representation of $\Sigma_n$. By $\nabla_n^r(\pi)$ denote 
the representation $\left(V^{\otimes n}\otimes \pi\right)^{\Sigma_n}$ of $\gl_r$.
\end{defn}

\begin{rem}
Note that at the level of characters $\nabla_n^r$ is known as the {\em Frobenius
characteristic map}.
\end{rem}

This functor can be generalized for our settings as follows. 

\begin{defn}
For an associative algebra $A$ and an integer $n$ by 
{\em wreath product} $\sdp{n}{A}$
denote the associative algebra, generated by $A^{\otimes n}$
and the elements of $\Sigma_n$ under the relation
\begin{equation}\label{comm}
(a_1\otimes \dots\otimes a_n) \cdot \sigma = \sigma \cdot (a_{\sigma(1)}\otimes \dots
\otimes a_{\sigma(n)})
\end{equation}
and identification of the unit $e \in \Sigma_n$ with $1\otimes \dots \otimes 1 
\in A^{\otimes n}$. 
\end{defn}

In other words, we have $\sdp{n}{A}\cong \CC[\Sigma_n] \otimes A^{\otimes n}$
as a vector space with the multiplication given by the relation~\eqref{comm}.
Let us show that $\nabla_n^r(\pi)$ is indeed a functor from representations of 
$\sdp{n}{A}$ to the representations
of the Lie algebra $\gl_r \otimes A$. 
Note that for this observation $A$ is not necessarily commutative, and  $\gl_r \otimes A$
can be defined as
the space of matrices with entries from $A$ with the usual commutator.

\begin{prop}
Let $\pi$ be a representation of $\sdp{n}{A}$. Then $\gl_r \otimes A$
acts on 
$\nabla_n^r(\pi)$
as follows
\begin{equation}\label{swact}
(g \otimes P) (u_1\otimes \dots \otimes u_n \otimes v) = \sum_{k=1}^n 
u_1 \otimes \dots \otimes g(u_k) \otimes \dots \otimes u_n \otimes \iota_k (P) v,
\end{equation}
where $\iota_k$ is the inclusion of $A$ into $A^{\otimes n}$ as the $k$-th factor
$1 \otimes \dots \otimes A \otimes \dots \otimes 1$.
\end{prop}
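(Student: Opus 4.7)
The plan is to verify two independent things: first, that formula \eqref{swact} defines a Lie algebra action of $\gl_r \otimes A$ on the whole tensor product $V^{\otimes n} \otimes \pi$; second, that this action commutes with the diagonal $\Sigma_n$-action, so that it descends to the invariants $\nabla_n^r(\pi)$. Under the matrix-with-entries-in-$A$ convention, the Lie bracket reads $[g_1 \otimes P_1, g_2 \otimes P_2] = g_1 g_2 \otimes P_1 P_2 - g_2 g_1 \otimes P_2 P_1$, so the first task is to check that the operators defined by \eqref{swact} satisfy this identity.

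For the bracket, I would apply $(g_1 \otimes P_1)(g_2 \otimes P_2)$ and $(g_2 \otimes P_2)(g_1 \otimes P_1)$ to a pure tensor $u_1 \otimes \cdots \otimes u_n \otimes v$ and split the resulting double sum into the off-diagonal ($j \ne k$) and diagonal ($j = k$) parts. For $j \ne k$ the two $g_i$'s act on different tensor factors of $V^{\otimes n}$, while $\iota_k(P_1)$ and $\iota_j(P_2)$ live in disjoint slots of $A^{\otimes n}$ and hence commute; all off-diagonal terms therefore cancel between the two orderings. For $j = k$ the diagonal contribution is $u_1 \otimes \cdots \otimes g_1 g_2(u_k) \otimes \cdots \otimes \iota_k(P_1)\iota_k(P_2) v$, and since $\iota_k$ is an algebra embedding this equals $\cdots \otimes \iota_k(P_1 P_2) v$; subtracting the reverse order gives precisely the image of $[g_1 \otimes P_1, g_2 \otimes P_2]$ under \eqref{swact}.

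For the equivariance step, I would extract from the wreath product relation \eqref{comm}, specialized to $a_i = 1$ for $i \ne k$ and $a_k = P$, the identity $\iota_k(P)\,\sigma = \sigma\,\iota_{\sigma^{-1}(k)}(P)$ inside $\sdp{n}{A}$. Then compute both $(g \otimes P)\sigma$ and $\sigma(g \otimes P)$ on $u_1 \otimes \cdots \otimes u_n \otimes v$, treating $\sigma$ as acting diagonally (by permutation on $V^{\otimes n}$ and via $\pi$ on the last factor). Reindexing the sum by $k \mapsto \sigma(k)$, and using the above identity on the $\pi$-factor while tracking that $g$ sits at the same position of $V^{\otimes n}$ after the reindexing, the two expressions match term-by-term, so $(g \otimes P)$ preserves $\Sigma_n$-invariants.

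The argument is essentially bookkeeping and there is no real obstacle; the only point where one must be careful is the non-commutative case, where one has to preserve the order $P_1 P_2$ versus $P_2 P_1$ inside each $\iota_k$ and make sure the position-tracking after the substitution $k \mapsto \sigma(k)$ is consistent with the diagonal action on $V^{\otimes n}$. Once that is done, both assertions follow.
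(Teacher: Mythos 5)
Your proposal is correct and follows essentially the same route as the paper: the paper's proof consists precisely of the two observations you verify in detail, namely that \eqref{swact} defines a Lie algebra action on $V^{\otimes n}\otimes \pi$ and that this action commutes with $\Sigma_n$ so it descends to the invariants. Your write-up simply supplies the bookkeeping (the diagonal/off-diagonal split of the bracket and the identity $\iota_k(P)\,\sigma = \sigma\,\iota_{\sigma^{-1}(k)}(P)$ extracted from \eqref{comm}) that the paper leaves to the reader.
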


\begin{proof}
First note that \eqref{swact} defines a Lie algebra action on $V^{\otimes n}\otimes \pi$. To complete the
proof it is enough to observe that this action commutes with the action of $\Sigma_n$.
\end{proof}

\begin{rem}
There are quantum analogs of this functor for a small number of variables. In the classical settings
representations of the type $A$ Hecke algebra corresponds to representations of $U_q(\gl_r)$.
In the one-variable settings this happens with the type $A$ affine Hecke algebra and the affine quantum group.
Finally for two variables there is a correspondence between double affine objects (see \cite{VV}, \cite{Gu}).
In these cases our correspondence  is the classical limit of these constructions.
\end{rem}

\begin{defn}
Let as above $A$ be a commutative finitely-generated algebra with an augmentation $\epsilon$. 
Note that $\epsilon$ can be extended as an augmentation of $S^n(A)$ in 
the natural way
$$\epsilon(a_1\circ\dots \circ a_n) = \epsilon(a_1) \dots \epsilon(a_n).$$ 
Introduce the space of
{\em diagonal harmonics}
$$DH_n(A) = \left. A^{\otimes n}\right/ S^n(A)_\epsilon \cdot A^{\otimes n}$$
as a representation of $\sdp{n}{A}$.
\end{defn}

This representation of $\Sigma_n$ is known for $A= \CC[x]$ due to the classical result of Chevalley, 
for $A=\CC[x,y]$ due to \cite{H}, for $A = \CC[x,y]/xy$ due to \cite{Ku}.
Note that Conjecture~\ref{triag} below proposes a description of this space for 
$A=\CC[x,y,z]$.

\begin{thm}\label{tdual}
For $\g = sl_r$ we have 
$$W^A_\epsilon (n\omega_1) \cong  \nabla_n^r\left(DH_n(A)\right).$$
\end{thm}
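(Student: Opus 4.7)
The plan is to use the universal property of the Weyl module (Theorem~\ref{wex}) to construct a surjection $W^A_\epsilon(n\omega_1) \twoheadrightarrow \nabla_n^r(DH_n(A))$, and then upgrade it to an isomorphism.

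First I would exhibit a distinguished vector $v := v_1^{\otimes n} \otimes \bar 1 \in V^{\otimes n} \otimes DH_n(A)$, where $v_1$ is the highest weight vector of the defining representation $V$ and $\bar 1$ is the class of $1 \otimes \cdots \otimes 1$ in $DH_n(A)$. Since both tensor factors are $\Sigma_n$-invariant, $v$ lies in $\nabla_n^r(DH_n(A))$. Using \eqref{swact} I would verify the defining highest-weight relations: the raising operators $e_{ij} \otimes P$ ($i<j$) annihilate $v$ because $e_{ij} v_1 = 0$, and for $h \in \h$ the action reduces to $\omega_1(h)\, v_1^{\otimes n} \otimes \bigl(\sum_k \iota_k(P)\bigr)\bar 1$. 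The crucial identity $\sum_k \iota_k(P) \equiv n\epsilon(P)\cdot 1^{\otimes n} \pmod{S^n(A)_\epsilon}$ --- which holds because the left side is $\Sigma_n$-invariant, hence lies in $S^n(A) \subset A^{\otimes n}$ with $S^n$-augmentation equal to $n\epsilon(P)$ --- then gives $(h \otimes P) v = n\omega_1(h)\epsilon(P) v$ in $\nabla_n^r(DH_n(A))$. Finite-dimensionality of $\nabla_n^r(DH_n(A))$ follows because $A_\epsilon^N$ annihilates $DH_n(A)$ for $N$ large. A spanning argument --- iterating lowering operators $e_{ji}\otimes P$ ($j>i$) on $v$, and using the collapse $\sum_k \iota_k(P)\bar 1 = n\epsilon(P)\bar 1$ to produce ``crossed'' configurations (in which the $V$-factor modification and the $A$-factor modification land in different tensor positions) --- shows that $v$ generates $\nabla_n^r(DH_n(A))$ as a $\g \otimes A$-module. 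Theorem~\ref{wex}(i) then furnishes the surjection $\phi: W^A_\epsilon(n\omega_1) \twoheadrightarrow \nabla_n^r(DH_n(A))$ with $\phi(v_{n\omega_1}) = v$.

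The main obstacle is injectivity. One natural strategy is to build an explicit inverse: define a map $V^{\otimes n} \otimes A^{\otimes n} \to W^A_\epsilon(n\omega_1)$ by sending a pure tensor $v_{i_1} \otimes \cdots \otimes v_{i_n} \otimes P_1 \otimes \cdots \otimes P_n$ to a suitably ordered product $(e_{i_1, 1} \otimes P_1) \cdots (e_{i_n, 1} \otimes P_n)\, v_{n\omega_1}$, and verify that it descends through the $\Sigma_n$-coinvariants --- encoding the wreath-product relation~\eqref{comm} --- and annihilates $S^n(A)_\epsilon \cdot A^{\otimes n}$, encoding the Cartan highest-weight relations satisfied by $v_{n\omega_1}$. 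Well-definedness is the delicate point, since it is exactly here that the defining relations of $W^A_\epsilon(n\omega_1)$ must match the $\sdp{n}{A}$-module structure of $DH_n(A)$. A possibly cleaner alternative is a character comparison: compute $\dim \nabla_n^r(DH_n(A))^\mu$ via the Frobenius interpretation of $\nabla_n^r$ applied to the $\Sigma_n$-character of $DH_n(A)$, and match it against the weight multiplicities of $W^A_\epsilon(n\omega_1)$ available through the fusion-product description of~\cite{FL2}; once weight multiplicities agree, the surjection $\phi$ is forced to be an isomorphism.
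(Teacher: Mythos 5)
Your first half is sound: the vector $v_1^{\otimes n}\otimes\bar 1$ is indeed a highest weight vector of weight $n\omega_1$, your identity $\sum_k\iota_k(P)\equiv n\epsilon(P)\cdot 1^{\otimes n}\ (\mathrm{mod}\ S^n(A)_\epsilon)$ is correct, and together with cyclicity and finite-dimensionality this yields the surjection $W^A_\epsilon(n\omega_1)\twoheadrightarrow\nabla_n^r(DH_n(A))$ via Theorem~\ref{wex}(i). But the proof stops exactly where you say it gets delicate, and neither of your two proposed routes to injectivity closes the gap. Route (a), the explicit inverse $V^{\otimes n}\otimes A^{\otimes n}\to W^A_\epsilon(n\omega_1)$: verifying that this map kills precisely the $\Sigma_n$-coinvariance relations and $S^n(A)_\epsilon\cdot A^{\otimes n}$ and nothing more is not a routine check --- it amounts to showing that the defining relations of the Weyl module impose no further relations on $S^n(V\otimes A)/\mathrm{im}(S^n(A)_\epsilon)$, which is the actual mathematical content here. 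Route (b), the character comparison, is circular: there is no independent computation of the weight multiplicities of $W^A_\epsilon(n\omega_1)$ for a general commutative $A$ (the fusion-product description is a one-variable statement, and determining these multiplicities is precisely the open problem the paper is organized around), so you have nothing to compare against.

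The paper's proof is a two-line argument because it leans entirely on the main theorem of \cite{FL2}, which presents $W^A_\epsilon(n\omega_1)$ explicitly as the quotient of $S^n(V\otimes A)$ by the image of the action of $S^n(A)_\epsilon$. Given that presentation, one only needs the tautological identification $\nabla_n^r(A^{\otimes n})\cong S^n(V\otimes A)$ and the exactness of $\nabla_n^r$ (taking $\Sigma_n$-invariants is exact in characteristic zero), under which the quotient by $S^n(A)_\epsilon\cdot A^{\otimes n}$ on one side matches the quotient by $\mathrm{im}(S^n(A)_\epsilon)$ on the other. To repair your argument, either cite that presentation --- at which point your surjection becomes redundant, since both sides are then visibly the same quotient --- or reprove it, which is substantially harder than anything in your sketch.
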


\begin{proof}
It is shown in \cite{FL2} that $W^A_\epsilon (n\omega_1)$ is the quotient of $S^n(V \otimes A)$ by 
the image of the action of $S^n(A)_\epsilon$. As we have  the natural isomorphism
$\nabla_n^r\left(A^{\otimes n} \right) \cong S^n(V \otimes A)$, it is enough to note that
the action of $S^n(A)_\epsilon$ on $S^n(V \otimes A)$ appears from its action on
$A^{\otimes n}$ by multiplication in the dual picture.
\end{proof}

Note that by taking a big enough $r$ we can reconstruct the action of $\Sigma_n$ on $DH_n(A)$
from the action of $\gl_r \otimes 1$ on $W^A_\epsilon (n\omega_1)$. And for $A = \CC[x^1, \dots, x^d]$,
$d < 4$,
the character of $W^A_\epsilon (n\omega_1)$ can be obtained from the following table.

\medskip\noindent
\begin{tabular}{|c|c|c|c|}
\hline
$d$ & $\dim W_{n\omega_1}$ & $\dim W_{n\omega_1}^{k_1\ve_1+ \dots +k_r\ve_r}$ & Reference \cr
\hline\hline
$0$ & $\bin{n+r}{r}$ & $1$   & well known \cr 
\hline
$1$ &  & 
{\bf Multinomial Coefficients}  & Corollary~\ref{gl} \cr
& $r^n$
& 
$\bin{n}{k_1, \dots k_r} = \frac{n!}{k_1!\dots k_r!}$ &  \cr
\hline
$2$ & {\bf Fuss-Catalan Number}  & {\bf ``Fuss-Narayana'' Numbers} &  Theorem~\ref{fsnr}\cr
& $ C_{n+1}^{(r)} = \left.\bin{r(n+1)}{n}\right/ (n+1)$ & 
$\left.\bin{n+1}{k_1}\bin{n+1}{k_2}\dots \bin{n+1}{k_r}\right/ (n+1)$
 &  \cr
\hline
$3$ 
& $\left.r\bin{(2r-1)(n+1)}{n-1}\right/\bin{n+1}{2}$ & $2^{r} (n+1)^{r-2} 
\prod\limits_{i=1}^r \frac{\bin{2(n+1)-k_i}{k_i}}{2(n+1)-k_i}$ & Conjecture~\ref{triag} \cr
\hline
\end{tabular}

\section{Explicit results}

\subsection{Binomial case $d=1$}

The following theorem first appeared as a conjecture and was proved for ${\mathfrak sl}_2$ in \cite{CP2},
then the proof was completed for ${\mathfrak sl}_r$ in \cite{CL}, for simply-laced Lie algebra
in \cite{FoLi} and recently H.Nakajima noticed that the general case can be obtained by
combining the results of \cite{BN}, \cite{K1}, \cite{K2}, \cite{N1}, \cite{N2}.

\begin{thm}\label{tns}\cite{CP2}\cite{CL}\cite{FoLi}\cite{N2}
We have
$$W^1 (\lambda) \cong \bigotimes_{i=1}^\rkg W^1(\omega_i)^{\otimes \lambda_i}$$
as $\g\otimes 1$-modules.
\end{thm}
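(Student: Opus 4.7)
The plan is in two parts: first, produce a surjection of $\g\otimes 1$-modules from $W^1(\lambda)$ onto the tensor product; second, show that the two sides have the same dimension.

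\emph{The surjection.} For each $i$ and each of the $\lambda_i$ copies of $W^1(\omega_i)$, pick pairwise distinct complex parameters $a_{i,j}$ and form the evaluation module of $\g \otimes \CC[t]$ in which $g \otimes t^k$ acts as $a_{i,j}^k \cdot g$. The tensor product of these evaluation modules is cyclic, generated by the tensor of the highest weight vectors $v = \bigotimes v_{\omega_i}$, whose weight with respect to $\h \otimes 1$ equals $\lambda$. Pass to the associated graded for the filtration by total $t$-degree, i.e.\ the fusion construction of \cite{FL1}. In the associated graded, $(h \otimes t^k)v = 0$ for $h\in\h$ and $k \ge 1$, because the original action produces a scalar multiple of $v$, which lies in strictly lower filtration degree and hence vanishes at the top piece; the relation $(f_{\alpha_i}\otimes 1)^{\langle\lambda,\alpha_i^\vee\rangle+1} v = 0$ is the familiar Serre-type identity for tensor products of integrable $\g$-modules. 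Hence the cyclic vector of the fusion product satisfies the defining relations of $W^1(\lambda)$, giving a surjection of current algebra modules. Since the fusion product and the ordinary tensor product share the same underlying vector space and $\g\otimes 1$-action, this supplies the desired surjection of $\g$-modules.

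\emph{The dimension bound.} One must show $\dim W^1(\lambda) \le \prod_i (\dim W^1(\omega_i))^{\lambda_i}$. For $\g = \gs_r$ this is done in \cite{CL} by constructing an explicit PBW-type spanning set indexed by semistandard-like tableaux, of size equal to the right-hand side. For simply-laced $\g$, \cite{FoLi} identify $W^1(\lambda)$ with a Demazure submodule of a level-one integrable highest weight module of $\widehat{\g}$ and use a Demazure flag to reduce to fundamental weights. In full generality the bound follows by combining the quiver-variety realizations of global Weyl modules \cite{BN}, \cite{K1}, \cite{K2}, \cite{N1}, \cite{N2} with flatness of specialization to the augmentation point.

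\emph{Main obstacle.} The surjection is essentially formal once the fusion product is available. The difficulty is the matching upper bound on $\dim W^1(\lambda)$: no uniform elementary PBW argument is known, and in the non-simply-laced cases the long/short-root integrability relations interact in a way that only the geometric machinery of quiver varieties appears to resolve.
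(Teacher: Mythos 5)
The paper itself offers no proof of this theorem: it is quoted from the literature, with the surrounding text recording that it was proved for $sl_2$ in \cite{CP2}, for $\gs_r$ in \cite{CL}, for simply-laced $\g$ in \cite{FoLi}, and in general by combining \cite{BN}, \cite{K1}, \cite{K2}, \cite{N1}, \cite{N2}. Your two-step plan --- a surjection from $W^1(\lambda)$ onto a fusion of fundamental pieces placed at distinct points, giving the lower bound on $\dim W^1(\lambda)$, plus the matching upper bound extracted from those references --- is exactly the architecture of the cited proofs, and you correctly locate the genuine difficulty in the upper bound, which neither you nor the paper attempts to reprove.

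One imprecision in the surjection step is worth fixing. You build each factor as an ``evaluation module'' on which $g\otimes t^k$ acts by $a_{i,j}^k\,g$, i.e.\ as the pullback of the $\g$-module $W^1(\omega_i)$ along evaluation at $a_{i,j}$. For non-simply-laced $\g$ the fundamental local Weyl module $W^1(\omega_i)$ is in general reducible as a $\g$-module and is not generated by its highest weight vector under $U(\g)$ alone; consequently this pullback is not cyclic over $\g\otimes\CC[t]$, and the submodule generated by $v$ in your tensor product is only the tensor product of the irreducibles $V(\omega_i)$, which is too small. The correct factor is the translated local Weyl module $W^{\CC[t]}_{\epsilon_{a_{i,j}}}(\omega_i)$ (equivalently, $W^1(\omega_i)$ with $t$ replaced by $t-a_{i,j}$), which is cyclic over $\g\otimes\CC[t]$ and has the same dimension; with that substitution the cyclicity of the tensor product (via the Chinese remainder theorem applied to $\CC[t]$ modulo powers of the distinct maximal ideals) and the passage to the associated graded go through exactly as you describe. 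For $\g=\gs_r$ and for simply-laced $\g$ the two constructions coincide, so there your argument is complete modulo the cited dimension bound.
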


\begin{rem}
Note that the structure of $\g \otimes \CC[x]$-module can be obtained by
replacing the tensor product by the {\em fusion product} from \cite{FL1}.
\end{rem}

\begin{rem}
In the one-variable case Weyl modules can be defined in the quantum group settings (see \cite{CP}) and
the analog of Theorem~\ref{tns} follows for them.
\end{rem}

\begin{crl}\label{pold1}
We have $\dim W^1(\lambda)^{\lambda-\mu}$ is a polynomial in $\lambda_1, \dots, \lambda_\rkg$ 
of degree $\mu^i$ in the variable $\lambda_i$, $i=1\dots \rkg$.
\end{crl}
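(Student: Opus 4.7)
My approach is to use Theorem~\ref{tns} to reduce the computation to a product of fundamental Weyl modules and then read off polynomiality from the generating function of the tensor product.

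First, Theorem~\ref{tns} provides the isomorphism of $\g$-modules
\[
W^1(\lambda) \;\cong\; \bigotimes_{i=1}^{\rkg} W^1(\omega_i)^{\otimes \lambda_i}.
\]
Introducing formal variables $q_1, \dots, q_{\rkg}$ and writing $q^{\nu} = \prod_j q_j^{\nu^j}$ for $\nu = \sum_j \nu^j \alpha_j \in Q^+$, multiplicativity of characters yields
\[
\sum_{\mu \in Q^+} \dim W^1(\lambda)^{\lambda-\mu}\, q^{\mu}
\;=\; \prod_{i=1}^{\rkg} f_i(q)^{\lambda_i},
\]
where $f_i(q) := \sum_{\nu} \dim W^1(\omega_i)^{\omega_i-\nu}\, q^{\nu}$ is a fixed polynomial in the $q_j$'s whose constant term is $1$.

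Next I would prove the structural lemma $f_i(q) = 1 + q_i\, h_i(q)$ for some polynomial $h_i(q)$ with non-negative integer coefficients; equivalently, every non-highest weight $\omega_i - \nu$ of $W^1(\omega_i)$ satisfies $\nu^i \geq 1$. For this, note that for each simple root $\alpha_j$ with $j \ne i$ and any $P \in \CC[x]$ the vector $(f_j \otimes P)\, v_{\omega_i}$ is annihilated by every $e_k \otimes Q$, since $[e_k, f_j] = \delta_{kj} h_j$ and $\langle \omega_i, \alpha_j^\vee \rangle = 0$ for $j \ne i$. Thus $(f_j \otimes P)\, v_{\omega_i}$ generates a submodule with a highest weight vector of non-dominant weight $\omega_i - \alpha_j$, and Theorem~\ref{wex} forces $(f_j \otimes P)\, v_{\omega_i} = 0$. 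By the usual PBW reduction (positive parts annihilate $v_{\omega_i}$, the Cartan acts by scalars), $W^1(\omega_i)$ is spanned by monomials $(f_{k_1} \otimes P_1) \cdots (f_{k_m} \otimes P_m)\, v_{\omega_i}$ in simple negative root vectors; if $\nu^i = 0$ and $\nu \ne 0$, every $k_s \ne i$, so the rightmost factor already annihilates $v_{\omega_i}$ and the weight space $W^1(\omega_i)^{\omega_i-\nu}$ vanishes.

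Finally, I would expand via the binomial series
\[
f_i(q)^{\lambda_i} \;=\; \sum_{k \geq 0} \binom{\lambda_i}{k}\, q_i^k\, h_i(q)^k.
\]
Every monomial of $q_i^k\, h_i(q)^k$ is divisible by $q_i^k$, so only $k \leq \mu^i$ contribute to the coefficient of $q^\mu$; combined with $\deg_{\lambda_i} \binom{\lambda_i}{k} = k$, this makes the coefficient of $q^\mu$ in $f_i(q)^{\lambda_i}$ a polynomial in $\lambda_i$ of degree at most $\mu^i$. Multiplying by the $\lambda_i$-independent factor $\prod_{j \ne i} f_j(q)^{\lambda_j}$ preserves the bound, and the same argument applied with any other index yields the matching bound in each variable. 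The main step is the structural lemma in the previous paragraph; everything else is routine generating-function bookkeeping, and I do not expect any further obstacle.
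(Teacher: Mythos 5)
Your argument is correct in substance but follows a genuinely different route from the paper's. Both proofs start from Theorem~\ref{tns}, and both hinge on the same structural fact about $W^1(\omega_i)$ --- that $\omega_i$ occurs with multiplicity one and every other weight lies at or below $\omega_i-\alpha_i$ --- which the paper simply asserts ``due to the highest weight condition'' and you actually prove. From there the paper peels off one fundamental factor at a time and runs an induction on $\mu$ in the partial order of $Q^+$, showing that the difference derivative in $\lambda_k$ is a polynomial of one degree less; you instead expand the whole character product $\prod_i\bigl(1+q_i h_i(q)\bigr)^{\lambda_i}$ by the binomial theorem in one stroke, replacing the induction by the observations that $\binom{\lambda_i}{k}$ is a polynomial of degree $k$ and that only $k\le\mu^i$ contributes to the coefficient of $q^\mu$. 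Your route is shorter and buys more: it exhibits the answer explicitly as $\sum_{k_i\le\mu^i}c_{k_1,\dots,k_{\rkg}}\prod_i\binom{\lambda_i}{k_i}$ with non-negative integer constants $c$, a positivity statement that the paper's induction leaves invisible. One point you leave open: the corollary asserts degree \emph{exactly} $\mu^i$, while you prove only the upper bound. This closes easily inside your framework: the top coefficient $c_{\mu^1,\dots,\mu^{\rkg}}$ is the constant term of $\prod_i h_i(q)^{\mu^i}$, namely $\prod_i\bigl(\dim W^1(\omega_i)^{\omega_i-\alpha_i}\bigr)^{\mu^i}$, which is positive because $\omega_i-\alpha_i$ lies in the Weyl orbit of $\omega_i$; since all the $c$'s are non-negative, no cancellation can lower the degree. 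A second, very minor point: $(f_j\otimes P)v_{\omega_i}$ need not be an $\h\otimes A$-eigenvector, so it is not literally a highest weight vector in the sense of the paper's definition and Theorem~\ref{wex} does not apply to it verbatim; but it is killed by $e_j\otimes 1$ and has $h_j$-eigenvalue $-2$, so it vanishes by the standard $sl_2$ argument in a finite-dimensional module, and your conclusion stands.
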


\begin{proof}
If $\mu \notin Q^+$ then this dimension is constantly zero. If $\mu=0$ then it is constantly one.

We say that $\mu>\nu$ if $\mu - \nu \in Q^+$. Note that it is a partial order and
that for each $\mu \in Q^+$ there is a finite number of $\nu \in Q^+$ such that
$\mu >\nu$.
Now suppose we know the statement for all
$\nu < \mu$, and let us deduce it for the given $\mu$.

Choose $k$ such that $\lambda_k \ne 0$. Then we have 
$W^1(\lambda) \cong W^1(\lambda - \omega_k)\otimes W^1(\omega_k)$.
Note that due to the highest weight condition for each weight $\eta$ of $W^1(\omega_k)$
we have either $\eta = \omega_k$ and $\dim W^1(\omega_k)^\eta = 1$, or $\eta \le \omega_k - \alpha_k$.
So
$$\dim W^1(\lambda)^{\lambda-\mu} = 
\sum_{\nu\in Q_+} 
\left(\dim W^1(\lambda-\omega_k) ^{\lambda-\omega_k-\nu}\right)
\left(\dim W^1(\omega_k) ^{\omega_k-\mu+\nu}\right)=$$
$$=
\dim W^1(\lambda-\omega_k)^{\lambda-\omega_k-\mu} +
\sum_{{\nu\in Q_+}\atop{\nu + \alpha_k \le \mu}} 
\left(\dim W^1(\lambda-\omega_k) ^{\lambda-\omega_k-\nu}\right)
\left(\dim W^1(\omega_k) ^{\omega_k-\mu+\nu}\right).
$$
Therefore by the induction hypothesis, $\dim W^1(\lambda)^{\lambda-\mu} - \dim W^1(\lambda-\omega_k)^{\lambda-\omega_k-\mu}$
is a polynomial of degree $\mu^k -1$ in $\lambda_k$ and degree
$\mu^i$ in $\lambda_i$ for $i \ne k$. So the difference
derivatives of $\dim W^d(\lambda)^{\lambda-\mu}$ are polynomials of desired degree and the statement follows.
\end{proof}

\begin{crl}\label{gl}
For $\g = \gl_r$ we have $\dim W^1(n\omega_1) = r^n$ and 
$$\dim W^1(n\omega_1)^{k_1\ve_1+ \dots+ k_r\ve_r} = \frac{n!}{k_1! \dots k_r!}$$
when $k_1 + \dots + k_r =n$, and zero otherwise.
\end{crl}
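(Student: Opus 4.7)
The plan is to invoke Theorem~\ref{tns} to reduce the problem to computing weight multiplicities in a tensor power of the vector representation. For $\g = \gl_r$ and $\lambda = n\omega_1 = n\ve_1$, Theorem~\ref{tns} gives
$$W^1(n\omega_1) \cong W^1(\omega_1)^{\otimes n}$$
as $\gl_r$-modules. Once $W^1(\omega_1)$ is identified with the standard representation $V = \CC^r$, the total dimension is immediately $r^n$, and the weight space of weight $k_1\ve_1 + \dots + k_r\ve_r$ (with $\sum k_j = n$) in $V^{\otimes n}$ has as basis the pure tensors $e_{i_1} \otimes \dots \otimes e_{i_n}$ in which exactly $k_j$ of the indices equal $j$, giving the multinomial coefficient $n!/(k_1! \cdots k_r!)$.

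The identification $W^1(\omega_1) \cong V$ requires a brief argument. For surjectivity $W^1(\omega_1) \twoheadrightarrow V$: view $V$ as a $\gl_r \otimes \CC[x]$-module via the augmentation $P \mapsto P(0)$, so that $\gl_r \otimes x\CC[x]$ acts by zero; then $e_1 \in V$ is a highest weight vector of weight $\omega_1$ generating a finite-dimensional module, so the universality in Theorem~\ref{wex}(i) yields the surjection. For the reverse bound on dimensions, Theorem~\ref{wex}(iv) constrains every weight $\mu$ of $W^1(\omega_1)$ to satisfy $\ve_1 - w(\mu) \in Q^+$ for every $w \in S_r$. Since $Q^+$ lies in the hyperplane $\sum v_i = 0$, each such $\mu$ has coordinate sum $1$; and if $\mu$ had any negative coordinate, a permutation placing it in the last position would violate the required membership in $Q^+$. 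Thus $\mu \in \{\ve_1, \dots, \ve_r\}$, and the $\omega_1$-weight space is one-dimensional because $W^1(\omega_1)$ is cyclic on a highest weight vector. Weyl-invariance of the $\gl_r$-character then forces each $\ve_j$-weight space to be one-dimensional, so $\dim W^1(\omega_1) = r = \dim V$ and the surjection is an isomorphism.

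The heavy lifting is done by Theorem~\ref{tns}, whose proof is distributed across the references \cite{CP2}, \cite{CL}, \cite{FoLi}, \cite{N2}; with that input in hand, the remaining work is purely elementary. The only mildly substantive step is the identification $W^1(\omega_1) \cong V$, which I expect to be the main (and only) obstacle, but as sketched above it is controlled by the weight support constraint of Theorem~\ref{wex}(iv) together with Weyl symmetry of the $\gl_r$-character.
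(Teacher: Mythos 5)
Your proposal is correct and takes essentially the same route as the paper: the paper's proof likewise combines Theorem~\ref{tns} with the observation that $\dim W^1(\omega_1)^\mu = 1$ for $\mu = \ve_i$ and zero otherwise, i.e. $W^1(\omega_1) \cong V$. You merely spell out the justification of that observation (via Theorem~\ref{wex} and Weyl symmetry), which the paper leaves implicit.
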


\begin{proof}
It follows from Theorem~\ref{tns} together with the observation that
$\dim W^1(\omega_1)^\mu = 1$ for $\mu = \ve_i$, $i=1\dots r$, and zero otherwise. 
\end{proof}

\subsection{Catalan case $d=2$}

From here fix $\g = \gl_r$. Let us show that the character combinatorics of 
two-variable Weyl modules
is pretty similar to the Catalan number combinatorics.

\subsubsection{Parking representations}

Consider the following elementary problem `from a real life'. Imagine we have $n$ cars and 
exactly $n$ parking spaces along
the road. Suppose that they are distributed into a sequence of $N$ parking lots with capacities
$m_1, \dots, m_N$. Now let each car has its own preferred lot, so we have a set-theoretical
function $f: \{1,\dots, n\} \to \{1,\dots,N\}$. Now the spaces are distributed as follows.
When $i$-th car appears on the road,
it goes directly to $f(i)$-th lot, then parks there, if it is not full, or at the first available lot 
after $f(i)$ otherwise.

\begin{prop}
Each car find its space if and only if we have 
$$\left|f^{-1}(\{1,\dots, s\})\right| \ge m_1+ \dots + m_s, \qquad s=1\dots N.$$
\end{prop}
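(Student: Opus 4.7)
The plan is to prove the two implications separately, exploiting that each car only moves forward from its preferred lot.

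For the forward implication, I would observe that a car with preference $p$ parks in some lot $\ge p$. So if every car finds a spot, the $m_1 + \dots + m_s$ cars occupying lots $1, \dots, s$ at the end of the process necessarily had preference in $\{1, \dots, s\}$. Since extra cars with preference $\le s$ may have overflowed to later lots, we conclude $|f^{-1}(\{1,\dots,s\})| \ge m_1 + \dots + m_s$, which is the desired inequality.

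For the reverse implication I would argue by contradiction: assume the inequalities hold but some car fails to park. Because the total capacity equals $n$, not every lot can be full at the end, so let $k$ be the largest index for which lot $k$ is not full at termination. Then lots $k+1, \dots, N$ are full. The key temporal observation is that once a space is filled it stays filled, so lot $k$ is never full during the process either. Hence every car with preference $\le k$ encounters an empty slot no later than lot $k$ and must park in some lot $\le k$. This identifies the cars occupying lots $1, \dots, k$ with exactly the set $f^{-1}(\{1,\dots,k\})$, and the number of such cars is at most $m_1 + \dots + m_{k-1} + (m_k - 1) < m_1 + \dots + m_k$, contradicting the hypothesis for $s = k$.

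The main subtlety, and the step I expect to be the principal obstacle, is pinning down the temporal statement \emph{lot $k$ is never full} from the static statement \emph{lot $k$ is not full at the end}; this requires invoking monotonicity of the filling process. Once this is clear, the counting argument is immediate. Everything else is essentially bookkeeping about the forward-only motion of cars along the road.
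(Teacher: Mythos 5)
Your proof is correct, but it follows a different route from the paper. The paper argues by induction on the number of cars: after the first car parks at lot $k$ (which is just $k=f(1)$, since all lots start empty), one passes to the remaining $n-1$ cars with capacities $m_1,\dots,m_k-1,\dots,m_N$, and the stated inequalities for the reduced problem are visibly equivalent to those for the original one. Your argument is instead global and non-inductive: the forward direction counts occupants of lots $1,\dots,s$ at termination and uses only the forward-only motion of cars; the reverse direction is a contradiction via the maximal non-full lot $k$, using the monotonicity of the filling process to upgrade ``lot $k$ is not full at the end'' to ``lot $k$ is never full,'' whence every car preferring a lot $\le k$ parks in a lot $\le k$ and the count $\left|f^{-1}(\{1,\dots,k\})\right| \le m_1+\dots+m_k-1$ contradicts the hypothesis at $s=k$. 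All steps check out, including the boundary case $k=N$. What your version buys is a fully explicit, self-contained argument in which the one genuinely delicate point (the temporal monotonicity) is isolated and justified; what the paper's induction buys is brevity and a reduction step that meshes naturally with the recursive structure used elsewhere (e.g.\ the recurrence relations for the characters $c_\xi$), at the cost of leaving the verification that the parking condition is preserved under the reduction to the reader.
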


\begin{proof}
Follows by induction on the number of cars. After the first car find the space at the $k$-th lot we consider all the
other cars with the same preference function, and capacities $m_1, \dots, m_k-1, \dots, m_N$.
\end{proof}

Note that the answer does not depend on the order of the cars. It motivates the following
definition.

\begin{defn}\cite{PP}\cite{PiSt}\cite{Yan}
For a vector $\v= (m_1, \dots, m_N)$ 
set $|\v| = m_1 + \dots + m_N$ and 
introduce 
the set of {\em generalized parking functions}
$$PF(\v) = \{ f : \{1,\dots, |\v|\} \to \NN \ | \ 
\left|f^{-1}(\{1,\dots, s\})\right| \ge m_1 + \dots + m_s,\  1\le s \le N  \}$$
with the action of $\Sigma_{|\v|}$ by permutations of arguments.
\end{defn}

\begin{defn}
Introduce the representation $\cpf(\v)$ of $\Sigma_{|\v|}$ as the space with basis
$PF(\v)$ and the corresponding action of the symmetric group.
\end{defn}

\begin{rem}
It is shown in \cite{H} that  we have 
$DH_n(\CC[x,y]) \cong \cpf((n)^t)\otimes \sgr$ as representations of $\Sigma_n$, where $\sgr$ is
the one-dimensional sign representation of $\Sigma_n$. 
\end{rem}

Note that the whole set of functions from $\{1, \dots, n\}$ to $\NN$
enumerates the monomial basis of $\CC[t_1,\dots, t_n]\cong \CC[t]^{\otimes n}$,
namely each $f$ corresponds to $t_1^{f(1)-1} \dots t_n^{f(n)-1}$. So 
if $A$ acts on $\CC[t]$ not increasing the degree of polynomials
then $\sdp{|\v|}{A}$ acts on $\cpf(\v)$.
Note that for this end $A$ is not necessarily commutative.

Let $\da = \CC\left<X,Y\right>$ be the associative algebra with the relation
$XY-YX = X$. Then $\da$ acts on $\CC[t]$ as follows:
$X$ acts as $\prt$, $Y$ acts as $t\prt$. As this action does not increase the
degree, the wreath product $\sdp{|\v|}{\da}$ acts on $\cpf(\v)$.
Note that $\da$ is a PBW-algebra, namely it is a flat deformation of the commutative
algebra $\CC[x,y]$, and we expect that the Weyl modules can be deformed to
the parking function representations.

\begin{thm} \cite{FL3}
For a partition $\xi$ 
there is a filtration on $\CC\left<X,Y\right>$-module
$\nabla_n^r(\cpf(\xi^t)\otimes \sgr)$ 
whose adjoint graded factor 
is a quotient of $W^2(\xi)$.
\end{thm}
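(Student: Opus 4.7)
The key idea is to exploit that $\da = \CC\langle X,Y\rangle/(XY-YX-X)$ is a PBW deformation of $\CC[x,y]$. Place both generators in filtration degree one, so $F_k\da = \mathrm{span}\{X^aY^b : a+b\le k\}$. Since the commutator $XY-YX=X$ lies in degree one, it is a lower-order correction and the associated graded algebra is commutative: $\gr \da\cong\CC[x,y]$. Placing $\Sigma_n$ in degree zero extends this to $\sdp{n}{\da}$ with $\gr \sdp{n}{\da}\cong\sdp{n}{\CC[x,y]}$, and via the functor $\nabla_n^r$ one obtains a filtration on the enveloping algebra $U(\gl_r\otimes\da)$ whose associated graded is $U(\gl_r\otimes\CC[x,y])$.

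To construct the filtered module, I would fix a highest weight vector $v$ of weight $\xi$ in $M := \nabla_n^r(\cpf(\xi^t)\otimes\sgr)$. By Schur--Weyl duality, the $V^\xi$-isotypic component of $M$ under $\gl_r \otimes 1$ equals $V^\xi\otimes\mathrm{Hom}_{\Sigma_n}(S^\xi,\cpf(\xi^t)\otimes\sgr)$, which is nonzero: the sign twist reduces the question to showing $S^{\xi^t}\hookrightarrow\cpf(\xi^t)$, and this can be realized via the Specht-polynomial generator built on the ``staircase'' parking function associated with $\xi^t$. Set $F_kM := F_kU(\gl_r\otimes\da)\cdot v$; since $\cpf(\xi^t)$, and hence $M$, are finite-dimensional, this filtration is finite.

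The associated graded $\gr M$ is then a finite-dimensional cyclic $\gl_r\otimes\CC[x,y]$-module generated by the image $\bar v$ of $v$ in $\gr_0 M = F_0M = U(\gl_r)\cdot v \cong V^\xi$. To apply Theorem~\ref{wex}(i), one must verify that $\bar v$ is a highest weight vector of weight $\xi$ in the sense of the Definition: the positive-root annihilation $(e_\alpha\otimes P)\bar v=0$ and the Cartan condition $(h\otimes 1)\bar v=\xi(h)\bar v$ follow from the construction of $v$, while the crucial augmentation condition $(h\otimes P)\bar v=0$ for $P\in\CC[x,y]_\epsilon$ amounts to showing that $(h\otimes X)v$ and $(h\otimes Y)v$ lie in $F_0M$. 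A weight count then forces them to be scalar multiples of $v$, and vanishing of those scalars makes these elements act as zero in $\gr M$. Granting this, Theorem~\ref{wex}(i) yields the surjection $W^2(\xi)\twoheadrightarrow\gr M$.

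The main obstacle is precisely the augmentation check --- verifying, on the explicit Specht realization of $S^{\xi^t}\subset\cpf(\xi^t)$, that the distinguished $v$ is killed by $\h\otimes X$ and $\h\otimes Y$ modulo the $\gl_r$-submodule it generates. The boundary cases $\xi=(n)$ (which recovers Theorem~\ref{tdual}) and $\xi=(1^n)$ collapse, but for a general partition one may need to replace the bare PBW filtration by a more refined one, combined with a concrete combinatorial description of the Specht line inside the parking module, in order to obtain the required vanishing after passage to the associated graded.
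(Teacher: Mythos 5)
Your overall strategy is the paper's: filter $\CC\left<X,Y\right>$ so that the associated graded algebra is $\CC[x,y]$, transport the filtration to the cyclic module via $F^m U(\gl_r\otimes\da)\cdot v$, check that the image of $v$ is a highest weight vector in the sense of the Definition, and invoke Theorem~\ref{wex}(i). But there are two genuine gaps at exactly the points where the argument has content. First, you never establish that your vector $v$ is \emph{cyclic} in $\nabla_n^r(\cpf(\xi^t)\otimes\sgr)$; without cyclicity the spaces $F^mU(\gl_r\otimes\da)\cdot v$ do not exhaust the module, so you get a filtration of a proper submodule rather than of $\nabla_n^r(\cpf(\xi^t)\otimes\sgr)$ itself. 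Your abstract existence argument (nonvanishing of $\mathrm{Hom}_{\Sigma_n}(S^{\xi^t},\cpf(\xi^t))$) only produces \emph{some} $\gl_r$-highest weight vector of weight $\xi$ in the isotypic component, and when that multiplicity space has dimension greater than one, an arbitrary choice need not be cyclic nor satisfy the augmentation condition. The paper instead writes down the explicit vector $v_\xi=\bigwedge_{i=1}^r\bigwedge_{j=0}^{\xi_i-1}v_i\otimes t^j$, for which cyclicity and the highest weight property are verified in \cite{FL3}; with this vector the key computations are immediate ($E_{ik}\otimes X^aY^b$ for $i<k$ kills $v_\xi$ because $\prt$ only lowers $t$-degrees already present in the wedge, and $E_{ii}\otimes Y$ acts by the scalar $\binom{\xi_i}{2}$). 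Second, you concede that the augmentation check --- the crux of the whole proof --- is an ``obstacle'' you have not overcome, so the proposal is a plan rather than a proof. Note also a logical slip there: once $(h\otimes Y)v\in F^0M$ its class in $\gr^1 M$ vanishes whether or not the scalar does (and for $v_\xi$ the scalar $\binom{\xi_i}{2}$ is genuinely nonzero); the weight count shows that membership in $F^0M$ implies proportionality to $v$, not the converse, so it cannot substitute for the required computation.

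A smaller divergence: you filter $\da$ by total degree in $X$ and $Y$, whereas the paper filters by the degree in $Y$ alone ($F^m$ spanned by $X^iY^j$ with $j\le m$), which it describes as Fourier dual to the natural degree filtration. Both filtrations have commutative associated graded isomorphic to $\CC[x,y]$, and with the explicit vector $v_\xi$ either would yield the quotient statement; but the facts you would need to import from \cite{FL3} (cyclicity, the highest weight property, and the spanning set of Proposition~\ref{xis}) are proved there for the $Y$-degree filtration, so your ``bare PBW filtration'' would force you to redo those verifications. Your own closing remark that one ``may need to replace the bare PBW filtration by a more refined one, combined with a concrete combinatorial description'' of the generator is essentially an admission that the missing ingredients are precisely the explicit $v_\xi$ and the paper's choice of filtration.
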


\begin{proof}
This filtration is obtained as follows: first introduce the filtration on $\CC\left<X,Y\right>$ such that
$F^m\CC\left<X,Y\right>$ is a linear span of $X^iY^j$ with $j \le m$. This filtration is Fourier dual
to the natural degree filtration (and that is why in \cite{FL3} we use a less convenient dual notation).
Then it defines a filtration on $\gl_r \otimes  \CC\left<X,Y\right>$, thus on 
$U(\gl_r \otimes  \CC\left<X,Y\right>)$ and therefore on each cyclic module. 

Let $v_1, \dots, v_r$ be the standard basis of $V$. Now introduce 
$$v_\xi = \bigwedge_{i=1}^r \bigwedge_{j=0}^{\xi_i-1} v_i\otimes t^{j} \in \nabla_n^r(\cpf(\xi^t)\otimes \sgr)$$
and note that this vector is cyclic. Set 
$$F^m \nabla_n^r(\cpf(\xi^t)\otimes \sgr) = F^m U(\gl_r \otimes  \CC\left<X,Y\right>) v_\xi,$$
then $\gr \left(\nabla_n^r(\cpf(\xi^t)\otimes \sgr)\right)$ 
is a representation of $\gr \left(\gl_r \otimes  \CC\left<X,Y\right>\right)
\cong \gl_r \otimes  \CC[x,y]$
and it is shown in \cite{FL3} that the image of $v_\xi$ is the highest weight vector. So this representation
is a quotient of the Weyl module.
\end{proof}

\begin{crl}
There is an inclusion $\nabla_n^r(\cpf(\xi^t)\otimes \sgr)$ to $W^2(\xi)$
as representations of $\gl_r \otimes 1$.
Moreover, for $\xi = (n)$ we have $W^2(\xi) \cong \nabla_n^r(\cpf(\xi^t)\otimes \sgr)$.
\end{crl}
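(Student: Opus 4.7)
The plan has two steps. For the inclusion, I would leverage the filtration on $\nabla_n^r(\cpf(\xi^t)\otimes \sgr)$ built in the previous theorem. The key observation is that $1 \in F^0\CC\langle X,Y\rangle$, so $\gl_r\otimes 1$ sits inside $\gl_r\otimes F^0\CC\langle X,Y\rangle$ and preserves each $F^m\nabla_n^r(\cpf(\xi^t)\otimes\sgr)$. Since finite-dimensional $\gl_r$-modules are completely reducible, this filtration splits as a filtration of $\gl_r$-modules, furnishing a (non-canonical) $\gl_r\otimes 1$-isomorphism
\[
\nabla_n^r(\cpf(\xi^t)\otimes\sgr)\;\cong\;\gr\nabla_n^r(\cpf(\xi^t)\otimes\sgr).
\]

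Next, the previous theorem provides a $\gl_r\otimes\CC[x,y]$-equivariant surjection $W^2(\xi)\twoheadrightarrow \gr\nabla_n^r(\cpf(\xi^t)\otimes\sgr)$. Restricting to $\gl_r\otimes 1$ and composing with the identification above yields a surjection of $\gl_r$-modules $W^2(\xi)\twoheadrightarrow \nabla_n^r(\cpf(\xi^t)\otimes\sgr)$, which splits by complete reducibility, giving the required embedding.

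For the stronger statement when $\xi=(n)$, I would combine Theorem~\ref{tdual} with Haiman's description of $DH_n(\CC[x,y])$ recorded in the remark after the definition of $\cpf(\v)$. Theorem~\ref{tdual} applied with $A=\CC[x,y]$ and $\lambda=n\omega_1$ gives
\[
W^2(n\omega_1)\;\cong\;\nabla_n^r\bigl(DH_n(\CC[x,y])\bigr),
\]
while Haiman's theorem provides a $\Sigma_n$-equivariant isomorphism $DH_n(\CC[x,y])\cong\cpf((n)^t)\otimes\sgr$. Inspecting formula~\eqref{swact} with $P=1$ shows that the $\gl_r\otimes 1$-module structure of $\nabla_n^r(\pi)$ depends only on the $\Sigma_n$-module structure of $\pi$ (since $\iota_k(1)$ equals the identity of $\sdp{n}{A}$); composing the two isomorphisms then yields $W^2((n))\cong\nabla_n^r(\cpf((n)^t)\otimes\sgr)$.

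The only technical point I would need to verify is that $\gl_r\otimes 1$ really sits in filtration degree zero, and this reduces to the tautology $1\in F^0\CC\langle X,Y\rangle$. Everything else is formal semisimplicity together with cited results, so I do not expect a serious obstacle.
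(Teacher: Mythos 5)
Your proposal is correct and follows essentially the same route as the paper, whose proof is just the compressed version: the inclusion is ``immediate due to semisimplicity of $\gl_r$'' (your splitting of the filtration and of the surjection onto the associated graded), and the $\xi=(n)$ case ``follows from Haiman's theorem and Theorem~\ref{tdual}'' exactly as you combine them. Your added observation that the $\gl_r\otimes 1$-structure of $\nabla_n^r(\pi)$ depends only on the $\Sigma_n$-structure of $\pi$ is the right justification for the last step, which the paper leaves implicit.
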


\begin{proof}
First statement is immediate due to semisimplicity of $\gl_r$, second statement follows from \cite{H}
and Theorem~\ref{tdual}.
\end{proof}

\begin{rem}
Note that the main conjecture of \cite{FL3} that these representations are indeed isomorphic.
\end{rem}

\begin{prop}\label{xis}
We have
$\dim \nabla_n^r(\cpf(\v)\otimes \sgr)^{k_1\ve_1+ \dots+ k_r\ve_r}$ is equal to the number of $|\v|$-elements subsets 
$H \subset \{1,\dots, r\} \times \NN$ such that 
\begin{equation}\label{xigrad}
|H\cap \{i\} \times \NN | = k_i, \ \ i=1\dots r;
\end{equation}
\begin{equation}\label{xiparks}
|H\cap \{1,\dots, r\} \times \{1,\dots, s\}| \ge m_1 + \dots + m_s,\ \ 1 \le s \le N.
\end{equation}
\end{prop}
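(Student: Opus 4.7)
The plan is to combine Frobenius reciprocity with an explicit count of sign-antisymmetric vectors in $\cpf(\v)$. Set $n=|\v|$ and let $\Lambda := \Sigma_{k_1} \times \cdots \times \Sigma_{k_r} \subset \Sigma_n$ be the stabilizer of the standard multi-index $I = (1^{k_1}, 2^{k_2}, \ldots, r^{k_r})$. The weight-$(k_1\ve_1 + \cdots + k_r\ve_r)$ subspace of $V^{\otimes n}$ has as its basis the $\Sigma_n$-orbit of $v_I$, hence is isomorphic to $\text{Ind}_\Lambda^{\Sigma_n} \mathbf{1}$ as a $\Sigma_n$-module. Frobenius reciprocity then yields
$$\nabla_n^r(\cpf(\v) \otimes \sgr)^{k_1\ve_1 + \cdots + k_r\ve_r} \cong (\cpf(\v) \otimes \sgr)^\Lambda = \{ f \in \cpf(\v) : \sigma f = \sgr(\sigma) f,\ \forall\, \sigma \in \Lambda\}.$$

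Next I would identify this sign-semi-invariant space combinatorially. The subgroup $\Lambda$ permutes the arguments of $f: \{1,\ldots,n\} \to \NN$ inside the $r$ consecutive blocks of sizes $k_1, \ldots, k_r$. Only those $f \in PF(\v)$ that are injective on each block can contribute: otherwise the transposition in $\Lambda$ swapping two arguments of equal $f$-value stabilizes $f$ but carries $\sgr = -1$, forcing the antisymmetrization $\sum_{\sigma \in \Lambda} \sgr(\sigma)\, \sigma\cdot f$ to vanish. For block-injective $f$ the stabilizer in $\Lambda$ is trivial, so the $\Lambda$-orbit has full size $|\Lambda|$ and its sign-antisymmetrization is nonzero and spans the sign-isotypic line supported on that orbit. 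Hence the dimension equals the number of $\Lambda$-orbits of block-injective parking functions, and such an orbit is determined precisely by the image sets $S_a \subset \NN$ of size $k_a$ on each block.

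Finally, encoding these data as the disjoint union $H := \bigsqcup_{a=1}^r \{a\} \times S_a \subset \{1,\ldots,r\} \times \NN$ translates the conditions verbatim: $|H \cap (\{a\} \times \NN)| = |S_a| = k_a$ is condition \eqref{xigrad}, and the parking inequality for $f$ becomes $|H \cap (\{1,\ldots,r\} \times \{1,\ldots,s\})| = \sum_a |S_a \cap \{1,\ldots,s\}| = |f^{-1}(\{1,\ldots,s\})| \ge m_1 + \cdots + m_s$, which is \eqref{xiparks}. The main subtlety in the argument is the role of the sign twist in $\pi = \cpf(\v) \otimes \sgr$: without it the count would run over multisets in each row rather than subsets, and it is precisely the $\sgr$ factor that discards parking functions repeating values within a block and converts the answer into the subset count claimed in the proposition.
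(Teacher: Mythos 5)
Your proof is correct, and it takes a genuinely different, more self-contained route than the paper. The paper's own proof is essentially a citation: it invokes \cite{FL3} for the fact that the wedge vectors $v_H=\bigwedge_{i\times j\in H}v_i\otimes t^{j-1}$ indexed by subsets $H$ satisfying~\eqref{xiparks} span (and form a basis of) $\nabla_n^r(\cpf(\v)\otimes\sgr)$, and then observes that condition~\eqref{xigrad} singles out the $v_H$ of the prescribed weight. You instead compute the weight multiplicity intrinsically: identifying the weight space of $V^{\otimes n}$ with $\mathrm{Ind}_\Lambda^{\Sigma_n}\mathbf{1}$ and applying Frobenius reciprocity reduces the claim to $\dim(\cpf(\v)\otimes\sgr)^{\Lambda}$, and your orbit-by-orbit analysis of the permutation basis $PF(\v)$ is sound --- non-block-injective functions are killed by the sign antisymmetrization because their $\Lambda$-stabilizer contains an odd transposition, each free orbit contributes exactly one line, and the orbits of block-injective parking functions are labelled by the row sets $S_a$, i.e.\ by the subsets $H$; since the parking inequalities are $\Sigma_{|\v|}$-invariant, the dictionary between the two sets of conditions is immediate. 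What the paper's approach buys is the explicit vectors $v_H$ themselves, which are reused later (e.g.\ in the proofs of Theorem~\ref{rec} and of the proposition on $PF(\v)^{(l)}$, where one computes the action of $E_{ij}\otimes M_{ab}$ on $v_H$); what your argument buys is independence from \cite{FL3} for this dimension count --- and indeed your antisymmetrized orbit sums are precisely the images of the $v_H$ under the isomorphism with $(\cpf(\v)\otimes\sgr)^{\Lambda}$, so you have in effect re-derived the relevant part of the cited basis theorem.
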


\begin{proof}
Each subset $H$ corresponds to a vector
\begin{equation}\label{xism}
v_H = \bigwedge_{i\times j \in H} v_i \otimes t^{j-1},
\end{equation}
and it is shown in \cite{FL3} that the vectors $v_H$ for $H$ under condition~\eqref{xiparks}
span the space $\nabla_n^r(\cpf(\v)\otimes \sgr)$. The condition~\eqref{xigrad} fixes the weight of
these vectors. 
\end{proof}

The following definition helps us further to work with such functions.

\begin{defn}\label{bp}
For a given $\v$
we say that $1\le s < N$ is a {\em boundary point} for $f \in  PF(\v)$ if 
$$\left|f^{-1}(\{1,\dots, s\})\right| = m_1 + \dots + m_s.$$

Similarly for a given $\v$ and a subset $H$ under condition~\eqref{xiparks} 
we say that an integer $s$ is a {\em boundary point} if
$$|H\cap \{1,\dots, r\} \times \{1,\dots, s\}| = m_1 + \dots + m_s.$$
\end{defn}

\begin{rem}
Let $\CC[x,y]_q$ be the associative algebra of $q$-polynomials, that is the algebra with relation
$xy-qyx=0$. Then $\CC[x,y]_q$ acts on $\CC[t]$ not increasing the degree, namely $x$ acts by $\prt$
and $y$ acts by $q^{t\prt}$ (that is $y\cdot t^k = q^k t^k$). 
So  $\sdp{|\v|}{\CC[x,y]_q}$ acts on $\cpf(\v)$
and $\gl_r \otimes \CC[x,y]_q$ acts on $\nabla_n^r(\cpf(\v))$.
\end{rem}

\begin{rem}
For $\xi = (n)$ the space of parking function can be quantized to a representation of the double
affine Hecke algebra (see \cite{Ch}, \cite{G}), so
Weyl modules can be quantized to a representation of the corresponding toroidal quantum group.
Unfortunately, for other $\xi$ there are no representations of these quantum algebras with
suitable characters.
\end{rem}

\subsubsection{Explicit formula for weight spaces}

The following lemma is a parking function analog of the Reni lemma about Catalan numbers (see \cite{S}).

\begin{lem}\label{Reni}
Let $f : \{1, \dots, n\} \to  \{1, \dots, n+1\}$ be an arbitrary function. Then there exists a unique cyclic permutation 
$$\sigma_k  = (1\,2\dots n+1)^k = \left(
\begin{array}{cccccccc}
1 & 2   & & \dots & & \dots & & n+1 \cr  
k & k+1 & \dots & n+1   & 1 & 2 & \dots & k-1 \cr
\end{array}\right)
$$
such that 
$$|(\sigma_k\circ f)^{-1}(\{1, \dots, s\})| \ge s \quad \mbox{for} \ 1 \le s \le n.$$ 
\end{lem}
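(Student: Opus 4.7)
The plan is to recast the statement as an instance of the cycle lemma of Dvoretzky--Motzkin and then to run its standard walk argument. First, set $c_j = |f^{-1}(j)|$ for $j = 1, \dots, n+1$, so that $c_1 + \dots + c_{n+1} = n$. A direct unwinding of $\sigma_k$ (using $\sigma_k^{-1}(m) \equiv m - k + 1 \pmod{n+1}$) gives
$$|(\sigma_k \circ f)^{-1}(\{1, \dots, s\})| = c_p + c_{p+1} + \dots + c_{p+s-1},$$
with indices taken cyclically mod $n+1$, where $p \equiv 2 - k \pmod{n+1}$. Thus the lemma reduces to the claim that among the $n+1$ cyclic shifts of $(c_1, \dots, c_{n+1})$, exactly one has the parking-function property that every initial segment of length $s \le n$ sums to at least $s$.

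Next, introduce the walk $S_j = (c_1 + \dots + c_j) - j$, so that $S_0 = 0$ and $S_{n+1} = -1$, and extend it to all integers by the quasi-periodicity $S_{j+n+1} = S_j - 1$. A short rewrite converts the parking condition for the $p$-th shift into the inequalities $S_{p-1} \le S_j$ for $j = p, \dots, p + n - 1$; equivalently, $S_{p-1}$ is the minimum value in the length $(n+1)$ window $\{S_{p-1}, S_p, \dots, S_{p+n-1}\}$.

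I then carry out the cycle-lemma argument in two steps. \emph{Existence:} let $q$ be the smallest index in $\{0, 1, \dots, n\}$ at which the finite sequence $S_0, \dots, S_n$ attains its minimum, and set $p = q + 1$. The values $S_j$ for $j \in \{p, \dots, n\}$ satisfy $S_j \ge S_q = S_{p-1}$ by construction; the values for $j \in \{n+1, \dots, p + n - 1\}$ equal $S_0 - 1, \dots, S_{p-2} - 1$ by quasi-periodicity, and each strictly exceeds $S_q - 1$ since $q$ was the \emph{first} occurrence of the minimum, so each is $\ge S_q = S_{p-1}$. \emph{Uniqueness:} if two distinct shifts $p < p'$ in $\{1, \dots, n+1\}$ both satisfied the condition, then applying the $p$-th window inequality at $j = p' - 1$ would force $S_{p'-1} \ge S_{p-1}$, while applying the $p'$-th window inequality at $j = p + n$ (which lies in $\{p', \dots, p' + n - 1\}$ because $p < p' \le p + n$) would force $S_{p'-1} \le S_{p+n} = S_{p-1} - 1$; these two inequalities are incompatible.

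The main obstacle is essentially bookkeeping: translating the cyclic action of $\sigma_k$ cleanly into a statement about cyclic shifts of the integer sequence $(c_1, \dots, c_{n+1})$, and then keeping track of the quasi-periodic indexing of $S$ without sign or off-by-one slips. There is no deeper structural difficulty beyond this once the reduction to a walk with total displacement $-1$ is made.
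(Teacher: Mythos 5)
Your argument is correct and is essentially the same as the paper's: both identify the admissible cyclic shift as the one starting just after the minimum of the lattice walk $S_s = |f^{-1}(\{1,\dots,s\})| - s$. The only difference is cosmetic — the paper tilts the walk to $F(s) = |f^{-1}(\{1,\dots,s\})| - \frac{n}{n+1}s$ so that all values are distinct and the unique minimum gives existence and uniqueness at once, whereas you keep the integer walk, take the first occurrence of the minimum for existence, and rule out a second admissible shift by the two-window contradiction.
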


\begin{proof}
For a given $f$ introduce $F: \{1, \dots, n+1\} \to \NN$ by
$$F(s) = |f^{-1}(\{1, \dots, s\})| - \frac{n}{n+1}s.$$
Then note that the values of $F$ are distinct, so
it has a unique minimum. Now suppose that  $F(s)$
is minimal at $s=s_0$ and  note that
the only choice to satisfy the condition is $k = n+2 - s_0$,
so the permuted numeration starts precisely after this minimum. 
\end{proof}

\begin{thm}\label{fsnr}
For $\g = \gl_r$ we have
$\dim W^2(n\omega_1) = \left.\bin{r(n+1)}{n}\right/ (n+1)$ and
$$\dim W^2(n\omega_1)^{k_1\ve_1+ \dots+ k_r\ve_r} = \frac{\bin{n+1}{k_1} \dots \bin{n+1}{k_r}}{n+1}$$
when $k_1 + \dots + k_r =n$, and zero otherwise.
\end{thm}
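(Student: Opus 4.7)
The plan is to combine the Corollary above (which gives $W^2((n)) \cong \nabla_n^r(\cpf((n)^t) \otimes \sgr)$, so with $\xi = (n)$ one has $\xi^t = (1,1,\dots,1)$) with Proposition~\ref{xis} to reduce the weight multiplicity to a combinatorial count. With $\v = (1,\dots,1)$ of length $n$, Proposition~\ref{xis} identifies $\dim W^2(n\omega_1)^{k_1\ve_1+\cdots+k_r\ve_r}$ with the number of $n$-element subsets $H \subset \{1,\dots,r\}\times\NN$ having row sums $|H \cap (\{i\}\times\NN)| = k_i$ and satisfying $|H \cap (\{1,\dots,r\}\times\{1,\dots,s\})| \ge s$ for $1 \le s \le n$. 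Taking $s=n$ forces every element of $H$ into columns $\{1,\dots,n\}$.

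To count such subsets I adapt the cycle lemma underlying Lemma~\ref{Reni}. Let $\mathcal{S}$ be the set of all $n$-element subsets of $\{1,\dots,r\}\times\{1,\dots,n+1\}$ with prescribed row sums $k_1,\dots,k_r$; clearly $|\mathcal{S}| = \prod_i \bin{n+1}{k_i}$. Let $\mathbb{Z}/(n+1)\mathbb{Z}$ act on $\mathcal{S}$ by rotating the column coordinate modulo $n+1$. I will establish two claims: (a) this action is free, so every orbit has size $n+1$, and (b) every orbit contains a unique \emph{parking} subset, meaning one whose column occupation vector $(c_1,\dots,c_{n+1})$ satisfies $c_1+\cdots+c_s \ge s$ for $1 \le s \le n$. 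Together these give exactly $\prod_i \bin{n+1}{k_i}/(n+1)$ parking subsets, proving the weight multiplicity formula; the total dimension $\bin{r(n+1)}{n}/(n+1)$ then follows by summing over compositions via Vandermonde's identity.

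For (a), if $H$ is fixed by a nontrivial rotation by $j$ and $d=\gcd(j,n+1)$, then the column occupation vector is $d$-periodic, so its total $n$ is divisible by $(n+1)/d$; hence $(n+1)\mid nd$, and with $\gcd(n,n+1)=1$ this forces $d=n+1$, a contradiction. For (b), set $S_j = (c_1+\cdots+c_j) - jn/(n+1)$; since $\gcd(n,n+1)=1$, the fractional parts of $S_0,\dots,S_n$ are all distinct, so there is a unique minimizing index $k^*$. Cyclic rotation by $k^*$ then yields the unique parking representative, as the parking inequalities after rotation become $S_{k^*+s} > S_{k^*}$ for $1 \le s \le n$ (reading indices modulo $n+1$ and using $S_0 = S_{n+1} = 0$). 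Note that any parking representative automatically has $c_{n+1}=0$, confirming that these are exactly the subsets counted in the original problem.

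The main obstacle is step (b): although morally identical to Lemma~\ref{Reni}, that lemma is stated for functions, not subsets, so a clean bridge is needed. One route is to linearly order the elements of $H$ and apply Lemma~\ref{Reni} to the resulting column-index function; another is to redo the partial-sum argument as sketched above. Either approach depends essentially on $\gcd(n,n+1) = 1$, which is precisely what makes the cyclic action free and singles out a unique parking orbit representative.
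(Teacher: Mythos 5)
Your proposal is correct and follows essentially the same route as the paper: reduce via the Corollary and Proposition~\ref{xis} to counting $n$-element subsets of $\{1,\dots,r\}\times\{1,\dots,n+1\}$ with prescribed row sums satisfying the parking condition, then show each orbit of the cyclic column-rotation contains exactly one parking representative, giving the division by $n+1$. Your partial-sum argument in step (b) is precisely the proof of Lemma~\ref{Reni} transcribed to subsets (and your freeness claim (a), while correct, already follows from the uniqueness in (b)), so the only difference is that you spell out the function-to-subset bridge that the paper dispatches with the phrase ``applying the Schur--Weyl duality to the statement of Lemma~\ref{Reni}.''
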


\begin{proof}
Note that the permutation group $\Sigma_{n+1}$ acts
on $\{1, \dots, r\} \times \{1,\dots n+1\}$ mapping $i\times j$ to $i \times \sigma(j)$.
So it acts on the subsets, preserving condition~\eqref{xigrad}.

Applying the Schur-Weyl duality to the statement of Lemma~\ref{Reni}, for any subset 
$H \subset \{1, \dots, r\} \times \{1,\dots, n+1\}$
we obtain the cyclic permutation
$\sigma_k$
such that 
$$\left|\sigma_k(H)\cap \{1, \dots, r\} \times \{1,\dots ,s\}\right| 
\ge s \quad \mbox{for} \ 1\le s \le n.$$

So the number of $n$-element subsets under condition~\eqref{xiparks} 
is the number of all 
$n$-element subsets divided by the number of cyclic permutations $n+1$, and
the number of subsets under
 conditions~\eqref{xigrad} and~\eqref{xiparks} is the number
of all subsets under condition~\eqref{xigrad} divided by the number of cyclic permutations $n+1$.
\end{proof}

\begin{rem} 
The same arguments produce a pretty  answer for $\xi = (n,\dots,n,1)$ and $\xi = (n,\dots, n,n-1)$,
but unfortunately for the more natural case $\xi =  (n,\dots,n)$ the corresponding polynomial is not reducible
over $\QQ$.
\end{rem}

\subsubsection{Recurrence relation}

Let us present a quadratic relation for characters of  Weyl modules 
that generalizes the recurrence relation for Calatan numbers.

\begin{defn}
Introduce a symmetric function $c_\xi = \ch \nabla^r_n(\cpf(\xi^t)\otimes \sgr)$, that is the Frobenius
characteristic map of $\cpf(\xi^t)\otimes \sgr$.
\end{defn}

\begin{thm}\label{rec}
\begin{enumerate}
\item
We have
$$c_{\xi} = \sum_{i=\xi_r}^{\xi_1-1}\, c_{\left(\xi_{> i}\right)'}\, \cdot \, c_{\xi_{\le i}},$$
where 
$(\xi_{> i})_j = \max(\xi_j-i,0)$, $(\xi_{\le i})_j = \min (\xi_j,i)$,
$$
(\xi')^t_j =
\begin{cases}
\xi^t_j +1 & 1 = j < \xi_1 \cr
\xi^t_j -1 & 1< j= \xi_1 \cr
 \xi^t_j & \mbox{otherwise}
\end{cases}
$$
\nopagebreak\smallskip

\begin{center}
\centerline{\psfig{file=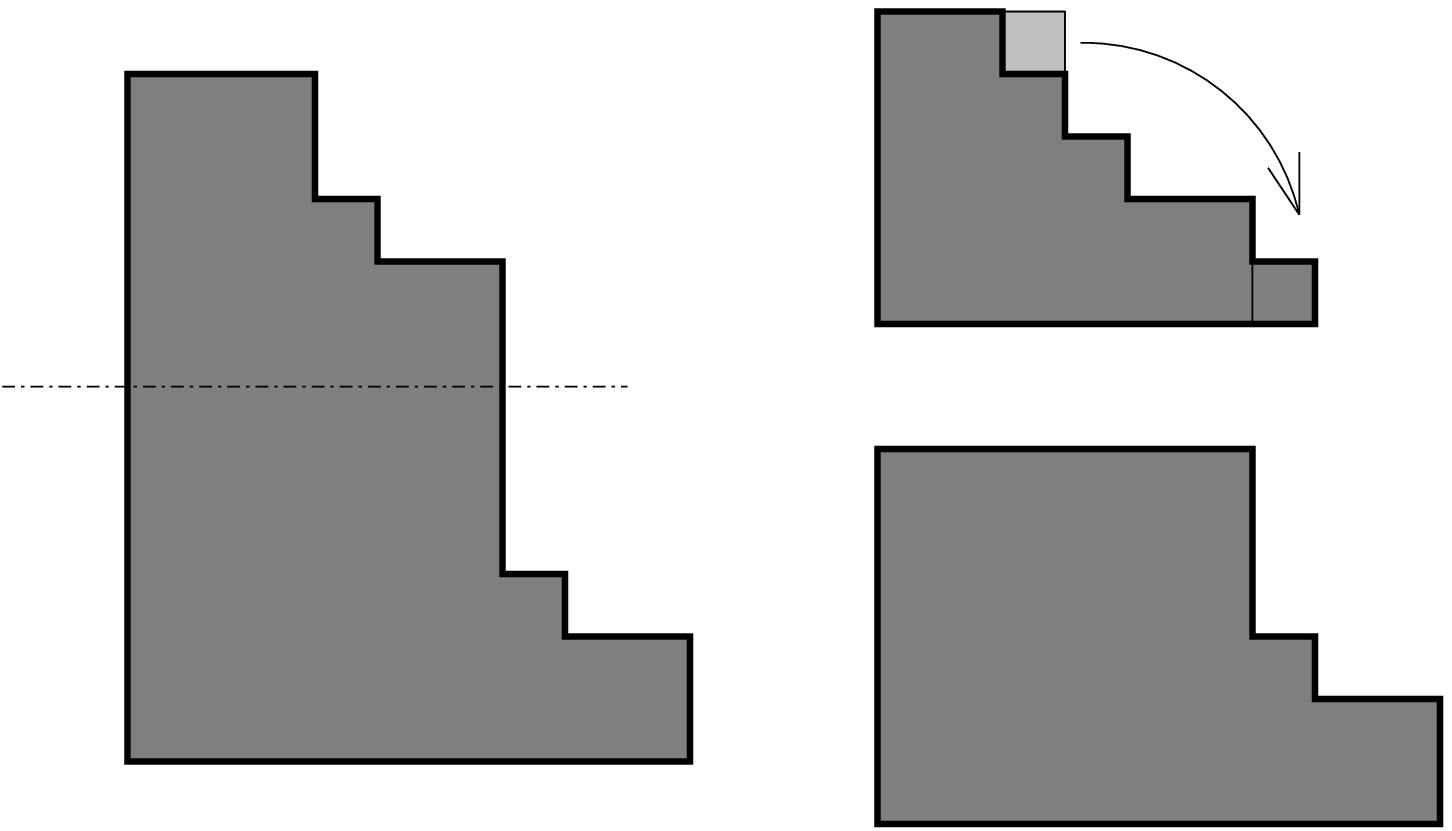,width=7cm}}
\end{center}

\item Suppose for a given $k$ we have $\xi_k - \xi_{k+1}>1$. Then 
$$c_{\xi} - c_{\xi-\alpha_k} = c_{\xi_{>(\xi_k-1)}} c_{\xi_{\le (\xi_k-1)}} + 
\sum_{i=\xi_{k+1}+1}^{\xi_k-2}  c_{(\xi_{> i}) - \alpha_k} c_{\xi_{\le i}}.$$
\end{enumerate}
\end{thm}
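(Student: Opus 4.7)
The plan is combinatorial: by Proposition~\ref{xis}, $c_\xi$ is the character counting valid ``parking configurations'' $H \subset \{1,\dots,r\}\times \NN$ graded by the row-weight $\mathbf{k}(H) = (k_1,\dots,k_r)$ with $k_j = |H \cap \{j\}\times\NN|$. For each such $H$, set $p_\xi(s) = |H \cap \{1,\dots,r\}\times\{1,\dots,s\}| - |\xi_{\le s}|$; the parking condition is $p_\xi \ge 0$, and $s$ is a \emph{boundary point} (Definition~\ref{bp}) when $p_\xi(s) = 0$. Column capacity forces $p_\xi(s)=0$ for every $s \le \xi_r$, and $s = \xi_1$ is trivially a boundary, so the ``interesting'' range of boundary points is $[\xi_r,\xi_1-1]$, which is exactly the index range appearing in (i).

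For part (i), I would assign to each $H$ its largest boundary point $i \in [\xi_r,\xi_1-1]$ and split $H = H_1 \sqcup H_2$, where $H_1 = H \cap (\{1,\dots,r\}\times\{1,\dots,i\})$ and $H_2$ is $H \cap (\{1,\dots,r\}\times\{i+1,i+2,\dots\})$ shifted by $-i$ in the second coordinate. A direct calculation from the definition of $\xi'$ yields $|\xi'_{\le s}| - |\xi_{\le s}| = 1$ for $s \in [1,\xi_1-1]$ and $= 0$ for $s = 0, \xi_1$. Translated column by column, this says that $H_2$ is a valid configuration for $(\xi_{>i})'$ iff $H_2$ is a valid configuration for $\xi_{>i}$ admitting no boundary point in $[1,\xi_1-i-1]$, which (after unshifting) says $H$ has no boundary in $(i,\xi_1-1]$. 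Together with the evident fact that $H_1$ is a valid configuration for $\xi_{\le i}$, this makes the splitting a bijection between configurations of $\xi$ with largest boundary at $i$ and pairs (configuration of $\xi_{\le i}$, configuration of $(\xi_{>i})'$). Since $\mathbf{k}(H) = \mathbf{k}(H_1) + \mathbf{k}(H_2)$, the character of the subset indexed by a fixed $i$ factorizes as $c_{(\xi_{>i})'}\,c_{\xi_{\le i}}$, and summation over $i$ yields (i).

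For part (ii), the same arithmetic shows $|(\xi-\alpha_k)_{\le s}| - |\xi_{\le s}| = 1$ for $s \in [\xi_{k+1}+1,\xi_k-1]$ and $= 0$ otherwise (crucially using $\xi_k - \xi_{k+1} > 1$); hence configurations of $\xi-\alpha_k$ embed into those of $\xi$ as precisely the subset having no boundary in $[\xi_{k+1}+1,\xi_k-1]$, so $c_\xi - c_{\xi-\alpha_k}$ is the character of the remaining configurations. I would partition these remaining configurations by the largest boundary $i \in [\xi_{k+1}+1,\xi_k-1]$ and apply the same splitting as in (i). For $i = \xi_k-1$ the condition ``no larger boundary in range'' is vacuous, so $H_2$ is an unrestricted configuration of $\xi_{>(\xi_k-1)}$, contributing $c_{\xi_{>(\xi_k-1)}}\,c_{\xi_{\le(\xi_k-1)}}$. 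For $i \in [\xi_{k+1}+1,\xi_k-2]$, a parallel direct computation gives $|((\xi_{>i}) - \alpha_k)_{\le t}| - |(\xi_{>i})_{\le t}| = 1$ for $t \in [1,\xi_k-i-1]$ and $= 0$ otherwise, so configurations of $(\xi_{>i}) - \alpha_k$ are exactly configurations of $\xi_{>i}$ with no boundary in $[1,\xi_k-i-1]$, which (after unshifting) is the residual requirement that $H$ have no boundary in $(i,\xi_k-1]$; summation yields (ii). The main obstacle is the precise bookkeeping needed to identify the modified partitions $\xi'$ and $(\xi_{>i}) - \alpha_k$ as encoding, via the arithmetic of $|(-)_{\le s}|$, exactly the ``no interior boundary'' conditions dictated by the choice of $i$ as the largest boundary in the prescribed range.
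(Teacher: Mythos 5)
Your argument is correct and follows essentially the same route as the paper's proof: you identify configurations for $(\xi_{>i})'$ (resp.\ $\xi-\alpha_k$) with configurations for $\xi_{>i}$ (resp.\ $\xi$) having no boundary points in the relevant interval, split $H$ at the largest boundary point, and multiply the contributions to the character. You have merely written out in full the column-count arithmetic that the paper leaves implicit.
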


\begin{proof}
For (i) note that $PF((\xi')^t)\subset PF(\xi^t)$ is the subset of parking functions
without boundary points (see Definition~\ref{bp}), so the same is true in the dual picture.

Now for a subset $H$ let $i$ be the biggest boundary point.
Splitting such $H$ into two parts: 
$$H_{>i} = H\cap \{1, \dots, r\} \times \{i+1, \dots\}, \qquad 
H_{\le i}  = H\cap \{1, \dots, r\} \times \{1, \dots, i\}$$
and multiplying their contributions to character, we identify both sides of the equality.

Proof of (ii) is similar, here we notice only boundary points on the interval 
$(\xi_{k+1}, \xi_k)$ and subtracting $\alpha_k$ to avoid them.
\end{proof}

\begin{crl}\label{pol2}
We have $\dim \nabla^r_n(\cpf(\xi^t)\otimes \sgr)^{\xi-\mu}$ is a polynomial in $\xi_1, \dots, \xi_r$ 
of degree $2\mu^i$ in $\xi_i-\xi_{i+1}$.
\end{crl}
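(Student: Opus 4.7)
Abbreviate $d_\mu(\xi) := \dim \nabla_n^r(\cpf(\xi^t)\otimes\sgr)^{\xi-\mu}$ and write $a_j := \xi_j - \xi_{j+1}$ for the consecutive differences. I would proceed by induction on $\mathrm{ht}(\mu) := \sum_j \mu^j$, with a secondary induction on $|\xi|$. The base case $\mu = 0$ gives $d_0 \equiv 1$. For the inductive step, fix an index $k$ with $\mu^k > 0$ and restrict to partitions with $a_k \ge 2$; since any polynomial in $a_k$ is determined by its values on a cofinite set, this suffices. Theorem~\ref{rec}(ii), after extracting the $(\xi - \mu)$-weight coefficient, yields
$$
d_\mu(\xi) = d_{\mu-\alpha_k}(\xi - \alpha_k) + [c_{\xi_{>(\xi_k-1)}} c_{\xi_{\le(\xi_k-1)}}]^{\xi-\mu} + \sum_{i=\xi_{k+1}+1}^{\xi_k - 2} [c_{(\xi_{>i}) - \alpha_k} c_{\xi_{\le i}}]^{\xi - \mu}.
$$

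Each product of characters on the right expands as a sum over decompositions $\nu_1 + \nu_2 = \mu - \alpha_k$ (internal terms) or $\nu_1 + \nu_2 = \mu$ (boundary term) of products $d_{\nu_1}(\cdot) \cdot d_{\nu_2}(\cdot)$. For the internal terms both heights $\mathrm{ht}(\nu_j) < \mathrm{ht}(\mu)$, so the primary induction applies to each factor. The boundary term contains two dangerous subterms $d_\mu(\xi_{>(\xi_k-1)})$ and $d_\mu(\xi_{\le(\xi_k-1)})$ from the trivial decompositions $\nu_j = 0$ in which $\mu$ does not decrease; these involve $d_\mu$ at partitions of strictly smaller size (note $|\xi_{>(\xi_k-1)}| + |\xi_{\le(\xi_k-1)}| = |\xi|$ with neither summand equal to $|\xi|$ when $\xi_k \ge 2$), and are dispatched by the secondary induction on $|\xi|$. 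The remaining boundary subterms again satisfy $\mathrm{ht}(\nu_j) < \mathrm{ht}(\mu)$ and fall under the primary induction.

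For the degree bookkeeping, the consecutive differences of $\xi_{>i}$ and $\xi_{\le i}$ coincide with $a_j$ for $j \ne k$, while at position $k$ they equal $\xi_k - i$ and $i - \xi_{k+1}$ respectively, summing to $a_k$. Summation over $i$ is therefore a discrete convolution in $a_k$, raising the degree in that variable by one. Each summand has degree at most $2\mu^k - 1$ in $a_k$ by sub-additivity $\nu_1^k + \nu_2^k \le \mu^k - 1$, so the sum has degree $\le 2\mu^k$ in $a_k$; meanwhile $d_{\mu - \alpha_k}(\xi - \alpha_k)$ carries degree $2\mu^k - 2$ in $a_k$ after the shift. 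For $j \ne k$, the bound $\nu_1^j + \nu_2^j \le \mu^j$ yields degree $\le 2\mu^j$ in $a_j$, as required.

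The main obstacle is the boundary term: rigorously applying the secondary induction requires verifying that $\xi_{>(\xi_k-1)}$ and $\xi_{\le(\xi_k-1)}$ land in a region where the polynomial formula for $d_\mu$ is already known, and checking that the polynomials thereby produced have the claimed degrees in the original $a_j$ rather than in the (shifted) differences of the smaller partitions. Tracking these substitutions through the partition operations $\xi \mapsto \xi_{>i}$, $\xi \mapsto \xi_{\le i}$ is routine but requires care, and the sub-additivity of the degree estimates across each decomposition $\nu_1+\nu_2$ must be verified uniformly.
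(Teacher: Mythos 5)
Your overall strategy --- extract the $(\xi-\mu)$-weight coefficient from the quadratic recurrence, induct on the height of $\mu$, and treat the sum over the splitting point $i$ as a discrete convolution in $a_k$ that raises the degree by one --- is the same as the paper's, and your bookkeeping for the internal terms matches the paper's use of Theorem~\ref{rec}~(ii). The genuine gap is the ``secondary induction on $|\xi|$'' by which you propose to dispatch the boundary subterms $d_\mu(\xi_{>(\xi_k-1)})$ and $d_\mu(\xi_{\le(\xi_k-1)})$. Polynomiality of $\xi \mapsto d_\mu(\xi)$ is a global statement about the function; an induction on $|\xi|$ presupposes a target polynomial fixed in advance, which you do not have at that stage, and, more importantly, the reduction does not go in the useful direction: as $a_k \to \infty$ with the other differences fixed, $|\xi_{\le(\xi_k-1)}|$ also tends to infinity, so ``smaller size'' never localizes the problem in the variable $a_k$ in which polynomiality is being claimed. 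These two subterms are not remainders to be absorbed by an auxiliary induction; they are the main terms. The $k$-th consecutive difference of $\xi_{\le(\xi_k-1)}$ equals $a_k-1$ while its earlier differences vanish, and $\xi_{>(\xi_k-1)}$ depends only on $a_1,\dots,a_{k-1}$; so the identity you wrote is really a difference-derivative relation in $a_k$, namely $d_\mu$ at $a_k$ minus $d_\mu$ at $a_k-1$ equals a quantity of degree $2\mu^k-1$, and polynomiality plus the degree bound follow by summing over $a_k$ --- but only after the dependence on $a_1,\dots,a_{k-1}$ carried by the other self-referential term has been controlled separately.

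The paper arranges exactly this by splitting the two tasks. For polynomiality it uses Theorem~\ref{rec}~(i) with $k$ chosen to be the \emph{first} index with a nonzero difference: then the upper piece of the top splitting is the fixed column $\omega_k$, the only self-referential term is precisely $P_\mu(\lambda-\omega_k)$, and an induction on $\mu$ combined with an induction on the number of nonzero differences (not on $|\xi|$) closes the argument, in parallel with Corollary~\ref{pold1}. Only afterwards does it invoke Theorem~\ref{rec}~(ii) to pin down the degree $2\mu^k$. To repair your single-recurrence plan you would need, at minimum, to replace the induction on $|\xi|$ by an induction on the number of indices $j$ with $a_j\neq 0$, and to recognize the boundary term as the shifted copy of $d_\mu$ that turns the identity into a genuine difference equation in $a_k$.
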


\begin{proof}
Proof is similar to the proof of Corollary~\ref{pold1}, but now we use Theorem~\ref{rec}~(i)
to show that this function is a polynomial and Theorem~\ref{rec}~(ii) to calculate the degree.

Note that the structure of Weyl module depends only on the restriction of $\xi$ to 
${\mathfrak sl}_r$.
So for $\lambda = \sum \lambda_i \omega_i$ let $P_\mu(\lambda) = \dim \nabla^r_n(\cpf(\xi^t)\otimes \sgr)^{\xi-\mu}$ with $\xi_i - \xi_{i+1} = \lambda_i$. 
Suppose that $\lambda = \lambda_k \omega_k + \dots + \lambda_{r-1}\omega_{r-1}$ with
$\lambda_k > 0$.
Then by Theorem~\ref{rec}~(i) we have
\begin{equation}\label{recwt}
P_\mu(\lambda) = 
\sum_{\nu \in Q^+} P_{\mu,\nu}(\lambda) \quad \mbox{with}
\end{equation}
\begin{align*}
P_{\mu,\nu}(\lambda) \qquad = \qquad 
P_\nu(\omega_k) &\cdot  P_{\mu-\nu}(\lambda-\omega_k) \quad + \\
+ \sum_{i=2}^{\lambda_k} P_{\nu-\alpha_k}(i\omega_k-\alpha_k)
&\cdot P_{\mu-\nu}(\lambda - i\omega_k)\quad + \\
+ \sum_{j=k+1}^{r-1} \sum_{i=1}^{\lambda_{j}}
P_{\nu-\alpha_{k\dots j}} (\lambda_{k\dots j-1} + i\omega_j - \alpha_{k\dots j})
&\cdot P_{\mu-\nu}(\lambda_{j\dots r-1}-i\omega_j),
\end{align*}
where 
$\alpha_{a\dots b} = \alpha_a + \dots + \alpha_b$, $\lambda_{a\dots b} = 
\lambda_a \omega_a + \dots + \lambda_b\omega_b$ for $1 \le a \le b <r$.

Let us perform an induction on $\mu$: suppose we know the
statement for all $\nu<\mu$.  Note that if $P_1$ and $P_2$ are polynomials
then $\sum_{i=1}^n P_1(i) P_2(n-i)$ is a polynomial in $n$. 
So all the sums in the right hand side except the first summand for
$\nu=0$ are polynomials by the induction hypothesis.
Therefore
the difference derivative
$P_\mu(\lambda) -
P_{\mu} (\lambda-\omega_k)$
is a polynomial in $\lambda_k, \dots, \lambda_{r-1}$
so by induction on $r-k$ we have $P_\mu$ is a polynomial.

To calculate the degree of $P_\mu$ in $\lambda_k$ we can now 
suppose that  $\xi_k - \xi_{k+1}= \lambda_k > 1$ and use
 Theorem~\ref{rec}~(ii)  
Here we have
\begin{equation}\label{recwt2}
P_\mu(\lambda) = P_{\mu-\alpha_k}(\lambda - \alpha_k) + \sum_{\nu \in Q^+} P_{\mu,\nu}^k(\lambda)
 \quad \mbox{with}
\end{equation}
\begin{align*}
P_{\mu,\nu}^k(\lambda) \qquad = \qquad
P_\nu(\lambda_{1\dots k-1} + \omega_k) &\cdot P_{\mu-\nu}(\lambda_{k\dots r-1} - \omega_k) \quad+\\
+ \sum_{i=2}^{\lambda_k-1} P_{\nu-\alpha_k} (\lambda_{1\dots k-1} + i\omega_k-\alpha_k)
&\cdot P_{\mu-\nu}(\lambda_{k\dots r-1} - i\omega_k).
\end{align*}
Note that if $P(i)$ is a polynomial of degree $m$ then $\sum_{i=1}^n P(i)$ is a polynomial
in $n$ of degree $m+1$. So in the case $\lambda_{1\dots k-1} = 0$ we have from~\eqref{recwt2}
by induction of $\mu$ that
$P_\mu(\lambda) - P_\mu (\lambda-\omega_k)$  has degree $2(\mu^k-1)+1$, so the statement follows.

Now general case follows from~\eqref{recwt2} by induction on $\mu$ as 
$P_\mu(\lambda) -  P_{\mu}(\lambda_{k\dots r-1} - \omega_k)$ has a smaller degree.
\end{proof}

\subsection{Hyper-Catalan case $d=3$}

\begin{cnj}\label{triag} Let $\g = \gl_r$. 
\begin{enumerate}
\item We have
$$\dim W^3(n\omega_1) = r\prod_{i=1}^{n-1} \frac{2(n+1)r - n -i}{n+2-i} = 
\left.r\bin{(2r-1)(n+1)}{n-1}\right/\bin{n+1}{2}.$$
\item We have
$$\dim W^3(n\omega_1)^{k_1\ve_1+ \dots+ k_r\ve_r} = 2^{r} (n+1)^{r-2} 
\prod_{i=1}^r \frac{\bin{2(n+1)-k_i}{k_i}}{2(n+1)-k_i}$$
when $k_1 + \dots + k_r =n$, and zero otherwise.
\end{enumerate}
\end{cnj}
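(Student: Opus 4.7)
The plan is to combine Theorem~\ref{tdual} with an explicit combinatorial model for $DH_n(\CC[x,y,z])$ as a representation of $\sdp{n}{\CC[x,y,z]}$, in direct analogy with the parking-function proof of Theorem~\ref{fsnr}. The factors $\bin{2(n+1)-k_i}{k_i}/(2(n+1)-k_i)$ in the conjectural formula are Raney/Fuss--Catalan numbers of second level, which strongly suggests working with subsets of an enlarged ground set of size roughly $2(n+1)$ and invoking a cycle-lemma argument of that order.

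First I would construct a three-variable analog of the parking-function module $\cpf(\v)\otimes\sgr$, whose image under $\nabla_n^r$ has a basis labeled by $n$-element subsets $H\subset\{1,\dots,r\}\times\NN$ subject to a suitable ``second-level'' parking condition. Proposition~\ref{xis} would then adapt to express weight multiplicities as counts of such subsets graded by $k_i=|H\cap\{i\}\times\NN|$. The presence of $2(n+1)$ throughout the formula, together with the global factor $2^r(n+1)^{r-2}$, points toward an ambient set $\{1,\dots,r\}\times\{1,\dots,2(n+1)\}$ carrying a cyclic action of order $2(n+1)$, with the extra $2^r$ arising from an additional $\mathbb Z/2$ symmetry acting row-by-row (which would account for the loss of two factors of $n+1$ relative to a naive $r$-fold product).

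Second, statement (ii) would follow from a Raney/Dvoretzky--Motzkin cycle-lemma argument generalizing Lemma~\ref{Reni}: one shows that among the $2(n+1)$ cyclic shifts of a subset $H$ a predictable number satisfy the parking inequality, and the product structure $\prod_i \bin{2(n+1)-k_i}{k_i}/(2(n+1)-k_i)$ in the answer would reflect that the rows of $H$ decouple once the cyclic reduction is performed. Part~(i) would then follow by summing~(ii) over compositions $k_1+\dots+k_r=n$ and applying a Raney--Vandermonde convolution for second-level Fuss--Catalan numbers; alternatively, a direct cycle-lemma count of $\dim W^3(n\omega_1)$ reproduces the factored form $r\bin{(2r-1)(n+1)}{n-1}/\bin{n+1}{2}$, which is itself a Raney expression.

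The main obstacle is the identification of the combinatorial model with $DH_n(\CC[x,y,z])$ as a $\Sigma_n$-module. Unlike the $A=\CC[x,y]$ case, where Haiman's theorem supplies this identification, no description of $DH_n(\CC[x,y,z])$ is known; extending the filtration argument of~\cite{FL3} with a PBW deformation $\CC\langle X,Y,Z\rangle$ of $\CC[x,y,z]$ yields at best an upper bound on weight multiplicities, and a matching lower bound appears to require a genuinely new input (for instance a geometric model via the Hilbert scheme of points on $\CC^3$, or an explicit construction of nonzero elements of $W^3(n\omega_1)$ realizing the candidate basis). A realistic intermediate target is a three-variable recurrence analog of Theorem~\ref{rec} for the candidate module, which combined with the polynomiality of Conjecture~\ref{cmain} would reduce the full conjecture to finitely many cases checkable by computer.
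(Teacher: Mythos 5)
This statement is stated in the paper as a \emph{conjecture}: the paper offers no proof of it, only circumstantial support (the numerical pattern in the tables, the consistency with Haiman's conjecture that $\dim DH_n(\CC[x,y,z]) = 2^n(n+1)^{n-2}$, and the polynomiality phenomenon of Conjecture~\ref{cmain}). So there is no argument in the paper for you to match, and your text should be judged as a research sketch rather than as a proof.

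As such a sketch it is sensible and correctly diagnoses the real obstruction, but it is not a proof and has a genuine gap at every load-bearing step. By Theorem~\ref{tdual}, part (ii) for all $r$ is essentially equivalent to knowing the $\Sigma_n$-character of $DH_n(\CC[x,y,z])$, and you acknowledge that no description of this module is known; the two-variable identification rests on Haiman's Hilbert-scheme vanishing theorems, which have no three-variable analogue (the Hilbert scheme of points on $\CC^3$ is singular), and the filtration/PBW-deformation technique of \cite{FL3} would at best give one-sided bounds even if a candidate ``second-level parking'' module were constructed. Moreover, the combinatorial half of your plan is itself only heuristic: you do not define the parking condition on subsets of $\{1,\dots,r\}\times\{1,\dots,2(n+1)\}$, do not prove spanning or linear independence of the proposed basis in any module, and do not establish the cycle-lemma count that is supposed to produce the factor $2^r(n+1)^{r-2}$ (a uniform ``predictable number of good shifts'' statement is exactly what fails in naive generalizations of Lemma~\ref{Reni} beyond the Catalan case). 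The only step that is genuinely routine is the deduction of (i) from (ii) by summing over weights, which reduces to a Raney-type convolution identity. Your suggested intermediate target --- a three-variable analogue of the recurrence of Theorem~\ref{rec} combined with polynomiality to reduce to finitely many checks --- is a reasonable direction, but note that polynomiality in $n$ for fixed $\mu$ (Conjecture~\ref{cmain}) is itself only conjectural for $d=3$, so even that reduction is not currently available.
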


\begin{rem}
This conjecture supports the conjecture of \cite{H1} that 
$$\dim DH_n(\CC[x,y,z]) = 2^n (n+1)^{n-2}.$$
Moreover, it follows that the trace of a permutation  $\sigma \in \Sigma_n$  consisting of cycles of length $k_1+1, \dots, k_m+1$
on $DH_n(\CC[x,y,z])$ is equal to
$$2^{n-k_1- \dots -k_m} (n+1)^{n-2-k_1- \dots -k_m} \prod_{i=1}^m \left[{2k_i+1}\atop{k_i+1}\right].$$

\end{rem}

\subsection{Singular cases}

First let us mention the result of \cite{Ku} on diagonal coinvariants for a double point, that is
$A = \CC[x,y]/xy\CC[x,y]$ and $\epsilon(P) = P(0,0)$.
It is shown there that for $n>0$ we have
$$W^A_\epsilon(n\omega_1) \cong V^{\otimes n} \oplus (n-1)\left(\wedge^2 V \otimes V^{\otimes n-2}\right).$$
So for $n\ge 1$ the dimensions of corresponding weight spaces are values of  a polynomial.

Another observation is related to $A=\CC[x,y]/x^l\CC[x,y]$ and $\epsilon(P) = P(0,0)$.

\begin{defn}
Introduce $PF(\v)^{(l)}$ as the subset of $PF(\v)$ formed by such functions
that on any interval $[s+1,s+l]$,
$0\le s <  N-l$, there is at least one boundary point.
\end{defn}

\begin{prop}
The quotient of $\nabla_n^r\left(\cpf(\v)\otimes \sgr\right)$
by the action of the Lie algebra $\gl_r \otimes X^l\CC\left<X,Y\right>$ is isomorphic to  
 $\nabla_n^r\left(\cpf(\v)^{(l)}\otimes \sgr\right)$.
\end{prop}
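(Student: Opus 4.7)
The plan is to identify the quotient with a natural $\Sigma_n$-module quotient via Schur--Weyl. Write $K\subset\cpf(\v)$ for the subspace spanned by $PF(\v)\setminus PF(\v)^{(l)}$. First I will verify that $K$ is a $\sdp{n}{\da}$-submodule: permutations preserve the multiset of values hence all boundary data; $\iota_k(Y)$ acts diagonally on the monomial basis by $g(k)-1$; and $\iota_k(X)$ sends $g$ to a scalar multiple of the parking function $g'$ with $g'(k)=g(k)-1$, whose boundary set differs from that of $g$ only at the single point $g(k)-1$ (where a boundary may disappear but none can appear). Hence any boundary-free length-$l$ interval of $g$ persists for $g'$, so $K$ is $\iota_k(X)$-stable and hence $\iota_k(X^l)$-stable. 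Consequently $\cpf(\v)^{(l)}\cong\cpf(\v)/K$ becomes a $\sdp{n}{\da}$-quotient on which the two-sided ideal $X^l\da$ acts by zero.

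The technical heart of the argument will be the identity
$$\sum_{k=1}^{n}\iota_k(X^l)\cpf(\v)\;=\;K.$$
For the inclusion $\subseteq$, I take $f\in PF(\v)$ with $f(k)>l$; the parking function $g$ with $g(k)=f(k)-l$ (agreeing with $f$ elsewhere) satisfies $|g^{-1}(\{1,\dots,s\})|=|f^{-1}(\{1,\dots,s\})|+1$ for every $s\in[f(k)-l,f(k)-1]$ (the value $f(k)$ has moved down into this interval), so any boundary of $f$ in that length-$l$ interval becomes strictly non-boundary for $g$; consequently $g\notin PF(\v)^{(l)}$. For $\supseteq$, given $g\in PF(\v)\setminus PF(\v)^{(l)}$ I list the boundaries of $g$ together with virtual endpoints as $0=b_0<b_1<\cdots<b_m<b_{m+1}=N$. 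The failure of the $(l)$-condition forces some gap to satisfy $b_{i+1}-b_i\ge l+1$. Since $s=b_i+1$ lies in the open gap $(b_i,b_{i+1})$, the parking inequality is strict there; combined with equality at $s=b_i$ this gives
$$|g^{-1}(\{b_i+1\})|=|g^{-1}(\{1,\dots,b_i+1\})|-|g^{-1}(\{1,\dots,b_i\})|\ge m_{b_i+1}+1\ge 1.$$
Therefore there exists $k$ with $g(k)=b_i+1$. Raising this value to $h=b_i+1+l\le b_{i+1}\le N$ produces a function $f\in PF(\v)$ (the new interval $[g(k),h-1]\subset(b_i,b_{i+1})$ is boundary-free), and $g$ is a nonzero scalar multiple of $\iota_k(X^l)f$.

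I will finish by applying the exact functor $\nabla_n^r(-\otimes\sgr)$ to the short exact sequence $0\to K\to\cpf(\v)\to\cpf(\v)^{(l)}\to 0$, producing the corresponding exact sequence of $\gl_r\otimes\da$-modules. The subspace $(\gl_r\otimes X^l\da)\cdot\nabla_n^r(\cpf(\v)\otimes\sgr)$ is automatically a $\gl_r\otimes\da$-submodule, because $X^l\da$ is a two-sided ideal of $\da$, and it lands in $\nabla_n^r(K\otimes\sgr)$ by the $\subseteq$ part of the identity above. The $\supseteq$ part, transported through Schur--Weyl, translates the generation statement $K=\sum_k\iota_k(X^l)\cpf(\v)$ into the corresponding statement on the $\gl_r\otimes\da$-side, giving equality of the two submodules. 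The coinvariant quotient is then $\nabla_n^r(\cpf(\v)^{(l)}\otimes\sgr)$ as claimed.

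The main obstacle is the $\supseteq$ direction of the combinatorial identity. Its proof hinges on the subtle observation that immediately after every boundary point the strict parking inequality forces the next parking lot to be preferred by at least one car, providing the position $k$ at which one can raise $g(k)$ by $l$ without destroying the parking condition.
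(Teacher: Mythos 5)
Your argument is correct, but it runs on the opposite side of the Schur--Weyl duality from the paper's proof. The paper works directly with the wedge basis $v_H$ of $\nabla_n^r(\cpf(\v)\otimes \sgr)$: it first observes that the image of $X^l\CC\left<X,Y\right>$ in ${\rm End}(\CC[t]^k)$ is the \emph{whole} space of operators lowering degree by at least $l$, hence contains the elementary operators $M_{ab}$, $a-b\ge l$, supported on a single monomial; consequently $E_{ij}\otimes M_{ab}$ moves a single element of $H$, and one reads off at once that the image of $\gl_r\otimes X^l\CC\left<X,Y\right>$ is spanned by the $v_H$ violating the $(l)$-condition. You instead stay on the symmetric-group side: you identify the kernel $K$ combinatorially as $\sum_k\iota_k(X^l)\cpf(\v)$ by a raising/lowering argument on a single car's preference, and your boundary-gap analysis (in particular the observation that strict inequality just after a boundary point supplies a car whose value can be raised by $l$) is sound. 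What your route buys is that it needs only the single operator $X^l$, not the computation of the full image of the ideal in ${\rm End}(\CC[t]^k)$, and it exhibits the kernel as an explicit $\sdp{n}{\da}$-submodule. What it costs is the transport step, which you assert rather than prove: knowing $K=\sum_k\iota_k(X^l)\cpf(\v)$ does not formally yield that the coupled operators $\sum_k g_{(k)}\otimes\iota_k(P)$, applied only to $\Sigma_n$-invariant vectors, reach all of $\nabla_n^r(K\otimes\sgr)$. This is true, but deserves a sentence: the map $\Psi\colon\bigl(\gl_r\otimes X^l\da\bigr)\otimes\bigl(V^{\otimes n}\otimes\cpf(\v)\otimes\sgr\bigr)\to V^{\otimes n}\otimes\cpf(\v)\otimes\sgr$ given by the action is $\Sigma_n$-equivariant with $\Sigma_n$ acting trivially on the first tensor factor, so by semisimplicity of $\CC[\Sigma_n]$ the image of the invariants equals the invariants of the image; and the image on the whole space decouples (take $g=1\in\gl_r$) to exactly $V^{\otimes n}\otimes K\otimes\sgr$ by your identity. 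With that justification inserted, your proof is complete and genuinely different from the paper's.
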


\begin{proof}
By $\CC[t]^k \subset \CC[t]$ denote the subspace of polynomials of degree less or equal $k$.
Note that $\CC\left<X,Y\right>$ acts on $\CC[t]^k$ and that the image of
$X^l\CC\left<X,Y\right>$ in ${\rm End} \left(\CC[t]^k\right)$ consists of all operators,
decreasing degree by at least $l$, so it is a linear span of  
the operators $M_{ab}$ for $a-b \ge l$
sending $t^a$ to $t^b$ and $t^i$ to zero for $i \ne a$.

By $E_{ij} \in \gl_r$ denote the matrix units. Then $E_{ij} \otimes M_{ab}$
acts on monomials~\eqref{xism} sending $v_H$ to 
$v_{H'}$ where
$$ H' = (H \setminus i\times a)\cup j \times b,$$ 
so the result contains no boundary points on the interval $[b,a-1]$.
And vise versa, if $a$ and $b-1$ are two neighbor boundary points of $H$ and $a>b$ then
there exists $i$ and $j$ such that $j \otimes b \in H$, $i \otimes a \not\in H$
and for 
$$H^\circ = (H \setminus j\times b)\cup i \times a$$
we have $v_{H^\circ} \in \nabla_n^r\left(\cpf(\v)\otimes \sgr\right)$.
So $v_H = (E_{ij} \otimes M_{ab}) v_{H^\circ}$.
\end{proof}

\begin{thm}
Let  $\g = \gl_r$  $A=\CC[x,y]/x^l\CC[x,y]$, $\epsilon(P) = P(0,0)$.
Then there exists a filtration on 
$\nabla_n^r\left(\cpf(\xi^t)^{(l)}\otimes \sgr\right)$ 
such that 
$\gr \nabla_n^r\left(\cpf(\xi^t)^{(l)}\otimes \sgr\right)$ 
is a quotient of $W^A_\epsilon(\xi)$.
\end{thm}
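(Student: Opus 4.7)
The plan is to combine the filtration from the proof of the earlier theorem (the $d=2$ case of \cite{FL3}) with the identification in the preceding proposition. Write $M = \nabla_n^r(\cpf(\xi^t)\otimes\sgr)$ and $\a = \gl_r\otimes X^l\da \subset \gl_r\otimes\da$. By the preceding proposition, $\bar M := \nabla_n^r(\cpf(\xi^t)^{(l)}\otimes\sgr) = M/\a M$.

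First, I would transport the filtration $F^m\da = \mathrm{span}\{X^iY^j : j\le m\}$ used in \cite{FL3} through this quotient. Since $X^l\da$ is spanned by monomials $X^iY^j$ with $i\ge l$, it is a homogeneous two-sided ideal for $F^\bullet$, and the filtration descends to $\da/X^l\da$ with associated graded isomorphic to $\CC[x,y]/x^l\CC[x,y] = A$. Equipping $M$ with the filtration $F^mM = F^mU(\gl_r\otimes\da)\, v_\xi$ from the proof of the earlier theorem, $\bar M$ inherits the quotient filtration $F^m\bar M = F^mM/(F^mM \cap \a M)$.

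Second, I would pass to the associated graded. By the earlier theorem, $\gr M$ is a quotient of $W^2(\xi)$ via a surjection sending $\gr v_\xi$ to the highest weight vector. Since $\a$ is a filtered ideal with $\gr\a = \gl_r\otimes x^l\CC[x,y]$ and $\gr\a\cdot\gr M \subseteq \gr(\a M)$, the action of $\gl_r\otimes x^l\CC[x,y]$ on $\gr M$ factors through zero on $\gr\bar M = \gr M/\gr(\a M)$. Hence $\gr\bar M$ is naturally a $\gl_r\otimes A$-module, cyclically generated by the image of $v_\xi$. The $\b\otimes\da$-highest weight relations verified in \cite{FL3} for $v_\xi$ persist in $\gr M$ and, after killing $\gl_r\otimes x^l\CC[x,y]$, become exactly the $\gl_r\otimes A$-highest weight relations of weight $\xi$. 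The universal property of $W^A_\epsilon(\xi)$ then produces the desired surjection $W^A_\epsilon(\xi)\twoheadrightarrow\gr\bar M$.

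The main delicate point is the interaction between $\gr$ and the quotient $M\to M/\a M$: one needs both that the quotient filtration correctly computes $\gr\bar M$ as the cokernel $\gr M/\gr(\a M)$, and that $\gr\a\cdot\gr M$ sits inside $\gr(\a M)$ so that the $A$-structure actually descends. Both come for free once one records that $X^l\da$ is homogeneous under $F^\bullet$, so the bulk of the argument is genuinely the transport of the $d=2$ filtration statement through the previously established quotient description. Once this compatibility is in hand, the surjection onto $\gr\bar M$ emanates from $W^A_\epsilon(\xi)$ rather than from some larger universal object, and the theorem follows.
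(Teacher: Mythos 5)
Your proposal is correct and follows essentially the same route as the paper: descend the filtration of \cite{FL3} to the quotient $\nabla_n^r\left(\cpf(\xi^t)^{(l)}\otimes \sgr\right)$, observe that the associated graded is a cyclic highest weight module for $\gl_r \otimes \CC[x,y]$ on which $\gl_r \otimes x^l\CC[x,y]$ acts by zero, and invoke the universal property of $W^A_\epsilon(\xi)$. The compatibility checks you flag (homogeneity of $X^l\da$ for $F^\bullet$ and $\gr\a\cdot\gr M\subseteq\gr(\a M)$) are left implicit in the paper but are exactly the right points to record.
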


\begin{proof}
Note that the filtration on $\nabla_n^r\left(\cpf(\xi^t)\otimes \sgr\right)$
produces a filtration on the quotient $\nabla_n^r\left(\cpf(\xi^t)^{(l)}\otimes \sgr\right)$.
Then
$\gr \nabla_n^r\left(\cpf(\xi^t)^{(l)}\otimes \sgr\right)$ is a highest weight representation
of $\gl_r \otimes \CC[x,y]$, where $\gl_r \otimes x^l \CC[x,y]$ acts by zero, so it is 
a highest weight representation of $\gl_r \otimes A$, hence a quotient of $W^A_\epsilon(\xi)$.
\end{proof}

\begin{cnj}
We have $W^A_\epsilon(\xi)  \cong  \gr \nabla_n^r\left(\cpf(\xi^t)^{(l)}\otimes \sgr\right)$.
\end{cnj}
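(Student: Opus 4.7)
The plan is to deduce the conjecture from the main conjecture of \cite{FL3}---that $W^2(\xi) \cong \gr \nabla_n^r(\cpf(\xi^t)\otimes \sgr)$---together with the Proposition preceding the theorem, and to verify it unconditionally in the one-row case $\xi=(n)$. The preceding theorem already supplies a surjection $W^A_\epsilon(\xi) \twoheadrightarrow \gr \nabla_n^r(\cpf(\xi^t)^{(l)}\otimes \sgr)$, so it is enough to construct a map in the reverse direction, or equivalently to bound dimensions.

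The core of the argument is a commutative square. Since $A = \CC[x,y]/x^l\CC[x,y]$ is a quotient of $\CC[x,y]$, the functoriality statement following Theorem~\ref{wex} gives a surjection $W^2(\xi) \twoheadrightarrow W^A_\epsilon(\xi)$; by universality its kernel equals the submodule generated by the action of $\gl_r\otimes x^l\CC[x,y]$. On the other hand, the Proposition just before the theorem identifies the quotient of $\nabla_n^r(\cpf(\xi^t)\otimes \sgr)$ by the action of $\gl_r\otimes X^l\CC\langle X,Y\rangle$ with $\nabla_n^r(\cpf(\xi^t)^{(l)}\otimes \sgr)$. Passing to the associated graded with respect to the $Y$-filtration used in \cite{FL3}, the Lie ideal $\gl_r \otimes X^l\CC\langle X,Y\rangle$ degenerates to $\gl_r \otimes x^l\CC[x,y]$, so the two quotient constructions become canonically isomorphic under the \cite{FL3} conjecture, producing the desired reverse map.

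For the one-row case $\xi=(n)$ this strategy should go through unconditionally: by Theorem~\ref{tdual}, $W^A_\epsilon(n\omega_1) \cong \nabla_n^r(DH_n(A))$, so the task reduces to identifying the $\Sigma_n$-module $DH_n(A)$ with $\cpf((1^n))^{(l)}\otimes \sgr$. Starting from Haiman's identification $DH_n(\CC[x,y]) \cong \cpf((1^n))\otimes \sgr$ recorded in the Remark before Proposition~\ref{xis} and dividing out by the image of $x^l$ should yield the result via the Proposition above, provided one checks that the defining relation $S^n(A)_\epsilon\cdot A^{\otimes n}$ of $DH_n(A)$ interacts correctly with the quotient of $\CC\langle X,Y\rangle$ by its left ideal $X^l\CC\langle X,Y\rangle$.

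The principal obstacle is clearly the dependence on the open conjecture of \cite{FL3} for partitions $\xi$ outside the one-row case. A direct argument bypassing \cite{FL3} would require an explicit spanning family of $W^A_\epsilon(\xi)$ indexed by subsets $H\subset\{1,\dots,r\}\times\NN$ satisfying both condition~\eqref{xiparks} and the $(l)$-boundary condition of Definition~\ref{bp}, together with a matching dimension count via Proposition~\ref{xis}. A secondary technical point is the compatibility of the filtration constructed in the previous theorem with the filtration of \cite{FL3}, which should follow automatically from the cyclic description $F^m = F^m U(\gl_r\otimes \CC\langle X,Y\rangle)\cdot v_\xi$ used there, since such cyclic filtrations are manifestly preserved under quotients by two-sided ideals.
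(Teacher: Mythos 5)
The statement you are asked to prove is stated in the paper as a conjecture, and the paper offers no proof of it; the only thing established there (in the theorem immediately preceding it) is the one-sided statement that $\gr \nabla_n^r\left(\cpf(\xi^t)^{(l)}\otimes \sgr\right)$ is a \emph{quotient} of $W^A_\epsilon(\xi)$. Your proposal does not close this gap: it openly reduces the isomorphism to the main conjecture of \cite{FL3}, which is itself unproved for general $\xi$, so what you have is at best a conditional implication, not a proof. Moreover, even that implication is not secured by your argument. The step where you pass to the associated graded and claim that the quotient by $\gl_r\otimes X^l\CC\left<X,Y\right>$ ``degenerates'' to the quotient by $\gl_r\otimes x^l\CC[x,y]$ is exactly where the difficulty sits: for a filtered module $M$ and the submodule $N=(\gl_r\otimes X^l\CC\left<X,Y\right>)M$, one only gets a surjection from $\gr M/\bigl((\gl_r\otimes x^l\CC[x,y])\,\gr M\bigr)$ onto $\gr(M/N)$, because the associated graded of $N$ in the induced filtration may be strictly larger than the submodule generated by the symbols of its generators. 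So this route reproduces the already-known surjection $W^A_\epsilon(\xi)\twoheadrightarrow \gr\nabla_n^r\left(\cpf(\xi^t)^{(l)}\otimes\sgr\right)$ and nothing more; the reverse map you need is precisely what does not come for free.

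The one-row case is also not ``unconditional'' as claimed. Theorem~\ref{tdual} does reduce it to showing $DH_n(A)\cong \cpf((n)^t)^{(l)}\otimes\sgr$ for $A=\CC[x,y]/x^l\CC[x,y]$, and Haiman's theorem identifies $DH_n(\CC[x,y])$ with $\cpf((n)^t)\otimes\sgr$; but $DH_n(A)$ is the quotient of $DH_n(\CC[x,y])$ by the images of the multiplication operators $\iota_k(x^l)$, and matching this quotient with the combinatorially defined subspace of parking functions with the $(l)$-boundary condition is again a Haiman-type vanishing statement for the singular ring $\CC[x,y]/x^l\CC[x,y]$ that is not available (the analogous result is cited only for the double point $\CC[x,y]/xy\CC[x,y]$, from \cite{Ku}). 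A genuine proof would have to supply the missing inequality $\dim W^A_\epsilon(\xi)\le \dim\nabla_n^r\left(\cpf(\xi^t)^{(l)}\otimes\sgr\right)$, for instance by exhibiting a spanning set of $W^A_\epsilon(\xi)$ indexed by the subsets $H$ of Proposition~\ref{xis} satisfying the $(l)$-boundary condition of Definition~\ref{bp}, or by a flatness and vanishing argument in the spirit of \cite{H}; neither is present in your write-up.
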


\begin{thm}\label{recfin}
Let $c_{\xi}^{(l)} = \ch \nabla_n^r\left(\cpf(\xi^t)^{(l)}\otimes \sgr\right)$. Then we have
$$c_{\xi}^{(l)} = \sum_{i=\max(\xi_1-l,\xi_r)}^{\xi_1-1} c_{\xi_{> i}'}^{(l)} c_{\xi_{\le i}}^{(l)}
=  \sum_{i=\max(\xi_1-l,\xi_r)}^{\xi_1-1} c_{\xi_{> i}'} c_{\xi_{\le i}}^{(l)}.$$
\end{thm}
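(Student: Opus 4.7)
The plan is to adapt the argument for Theorem~\ref{rec}~(i) by tracking how the window condition defining $\cpf(\xi^t)^{(l)}$ interacts with the decomposition at the largest boundary point. Passing to the dual picture, for each subset $H\subset\{1,\dots,r\}\times\{1,\dots,\xi_1\}$ contributing to $c_\xi^{(l)}$ I would identify a canonical split determined by its largest boundary point $i$, and then sum over the admissible positions of $i$.

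First I would locate the range of $i$. The lower bound $i\ge\xi_r$ is inherited from Theorem~\ref{rec}~(i): for every $s\le\xi_r$ the parking inequality $|H\cap\{1,\dots,r\}\times\{1,\dots,s\}|\ge rs$ saturates since the left-hand side is trivially at most $rs$, so every such $s$ is automatically a boundary point. The new contribution of the $(l)$-hypothesis is to forbid boundary-point-free windows strictly above $i$: if $\xi_1-i>l$, the window $[i+1,i+l]$ corresponds to an index $s=i$ in the range $0\le s<\xi_1-l$ and contains no boundary point (since $i$ is the largest), a contradiction. Hence $i\ge\xi_1-l$, and the two lower bounds combine to $i\ge\max(\xi_1-l,\xi_r)$.

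Next I would perform the split exactly as in the proof of Theorem~\ref{rec}~(i): write $H=H_{\le i}\sqcup H_{>i}$, so that $H_{\le i}\in PF((\xi_{\le i})^t)$ and $H_{>i}\in PF((\xi_{>i})^t)$ has no boundary points at all, and therefore lies in $PF((\xi_{>i}')^t)$. To read off the surviving $(l)$-constraints, I classify the windows $[s+1,s+l]$ by their position relative to $i$: windows contained in $\{1,\dots,i-1\}$ translate verbatim to the $(l)$-condition for $H_{\le i}$ in $\cpf((\xi_{\le i})^t)^{(l)}$; windows straddling $i$ contain the boundary point $i$ itself and are automatic; and windows strictly above $i$ were excluded in the previous step. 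Multiplying the Frobenius characteristics of the two factors and summing over $i$ delivers the first equality $c_\xi^{(l)}=\sum c_{\xi_{>i}'}^{(l)}\,c_{\xi_{\le i}}^{(l)}$.

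The second equality is then essentially cosmetic: for each $i$ in the summation range, $\xi_{>i}'$ has $\xi_1-i\le l$ columns, so the index set $\{s:0\le s<(\xi_1-i)-l\}$ of required windows is empty and the $(l)$-condition is vacuous, whence $c_{\xi_{>i}'}^{(l)}=c_{\xi_{>i}'}$. The one place where I expect care to be needed is the extreme case $\xi_1-i=l$, where I must confirm from Definition~\ref{bp} (which uses the strict inequality $s<N-l$) that the window $[i+1,\xi_1]$ is not among those the $(l)$-condition requires to contain a boundary point, so that no spurious constraint is introduced at the edge of the summation range.
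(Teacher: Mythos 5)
Your proof is correct and follows essentially the same route as the paper's (which simply says ``similar to Theorem~\ref{rec}~(i), noting the last boundary point is at least $\xi_1-l$, and $c_{\xi_{>i}'}^{(l)}=c_{\xi_{>i}'}$ since $\xi_{>i}'$ has at most $l$ columns''); you have merely spelled out the window classification and the edge case $\xi_1-i=l$ that the paper leaves implicit. One cosmetic slip: the strict inequality $s<N-l$ you invoke comes from the definition of $PF(\v)^{(l)}$, not from Definition~\ref{bp}, which defines boundary points.
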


\begin{proof}
Similar to proof of Theorem~\ref{rec} (i), just note that the last boundary point is not less than $\xi_1-l$.
Here $c_{\xi_{> i}'}^{(l)} = c_{\xi_{> i}'}$ because $\xi_{> i}'$ has $l$ rows or less.
\end{proof}

\begin{rem}
Note that there are no analog of  Theorem~\ref{rec} (ii) for this case.
\end{rem}

\begin{ex}
Set $r=2$. Let $C_n$ be the usual Catalan numbers and 
$$C_{n+1}^{(l)} = \dim \nabla_n^2\left(\cpf((n)^t)^{(l)}\otimes \sgr\right).$$
Then we have $C_n^{(l)} = C_n$ for $1\le n \le l+1$ and
$$C_{n+1}^{(l)} = 2C_{n}^{(l)} + C_{n-1}^{(l)} + 2C_{n-2}^{(l)} + 5 C_{n-3}^{(l)}
+ \dots + C_{l-1} C_{n-l+1}^{(l)}.$$
\end{ex}

\begin{crl}
Fix $\mu$ and suppose that $\xi_i - \xi_{i-1} > l\mu^i+l$ for all $i$. 
Then the dimension of $\nabla_n^r\left(\cpf(\xi^t)^{(l)}\otimes \sgr\right)^{\xi-\mu}$ is a polynomial in 
 $\xi_1, \dots, \xi_r$ of degree $\mu^i$ in $\xi_i-\xi_{i+1}$.
\end{crl}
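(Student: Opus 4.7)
The plan is to adapt the proof of Corollary~\ref{pol2}, using Theorem~\ref{recfin} in place of Theorem~\ref{rec}. Set $P_\mu^{(l)}(\xi) = \dim\nabla_n^r(\cpf(\xi^t)^{(l)}\otimes\sgr)^{\xi-\mu}$ and induct on $\mu$ in the $Q^+$-order (the base case $\mu = 0$ being immediate). Under the hypothesis that each consecutive difference $\lambda_i = \xi_i - \xi_{i+1}$ exceeds $l\mu^i + l$, the lower bound in Theorem~\ref{recfin} equals $\xi_1 - l$, so the recurrence has exactly $l$ summands indexed by $j = \xi_1 - 1 - i \in \{0, 1, \ldots, l-1\}$:
$$c_\xi^{(l)} \;=\; \sum_{j=0}^{l-1} c_{\xi_{>\,\xi_1-1-j}'}\cdot c_{(\xi_1-1-j,\,\xi_2,\ldots,\xi_r)}^{(l)}.$$
The first factor is the character of a fixed representation of $\gl_r$ attached to the two-row partition $(j,1)$ (or the one-box partition for $j=0$); in particular its weight multiplicities $[c_{\xi_{>i}'}]_\nu$ are constants, independent of $\xi_1$.

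Taking the $(\xi-\mu)$-weight coefficient of both sides gives a relation of the form
$$P_\mu^{(l)}(\xi) \;=\; P_\mu^{(l)}(\xi - \ve_1) \;+\; R(\xi,\mu),$$
where $R(\xi,\mu)$ collects terms $[c_{\xi_{>i}'}]_\nu\cdot P_{\mu'}^{(l)}(\xi')$ with $\mu' = \mu - ((j+1)\ve_1 - \nu)$ strictly less than $\mu$. The isolation of the leading term rests on the observation that any weight $\nu$ of $c_{\xi_{>i}'}$ satisfies $\nu_1 \le j$ (the largest row of $(j,1)$), so the equation $(j+1)\ve_1 = \nu$ has the unique solution $j = 0$, $\nu = \ve_1$, and the leading term is exactly $P_\mu^{(l)}(\xi - \ve_1)$. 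The same inequality forces every non-leading summand to satisfy $(\mu')^1 < \mu^1$, so by the inductive hypothesis $R(\xi,\mu)$ is a polynomial in $\lambda_1, \ldots, \lambda_{r-1}$ of degree strictly less than $\mu^1$ in $\lambda_1$. Solving the resulting difference equation in $\xi_1$ then gives the required degree-$\mu^1$ polynomial dependence of $P_\mu^{(l)}$ on $\lambda_1$.

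Polynomiality of the correct degree in the remaining $\lambda_i$ is obtained by iterating this argument after restricting to sub-partitions $(\xi_2, \ldots, \xi_r)$, with a secondary induction on $r$. The main obstacle is the double bookkeeping: one must verify that each shift $\xi \to \xi'$ and $\mu \to \mu'$ appearing in $R$ preserves a suitable (slightly weakened) version of the large-difference hypothesis for the smaller $\mu'$, which is precisely why the margin $l\mu^i + l$ is built into the statement, and one must track degrees through these shifts. Unlike the smooth case of Corollary~\ref{pol2}, we have no analog of Theorem~\ref{rec}(ii) in the singular setting, so the degree bounds in each $\lambda_i$ cannot be extracted from a single recursion and must be obtained through the layered induction.
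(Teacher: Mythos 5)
Your proposal is correct and follows essentially the same route as the paper: both rest on the truncated recurrence of Theorem~\ref{recfin} (whose lower limit becomes $\xi_1-l$ under the large-gap hypothesis, leaving a bounded number of summands), isolate the leading term $P_\mu^{(l)}(\xi-\ve_1)$, and control the remainder by induction on $\mu$ in the $Q^+$-order. The paper organizes the passage to the remaining variables slightly differently --- first treating $\lambda$ supported on $\omega_k,\dots,\omega_{r-1}$ and then inducting on $\lambda_1+\dots+\lambda_{k-1}$ via the truncated version of~\eqref{recwt} --- but this is the same layered induction you describe, carried out at the same level of detail.
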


\begin{proof}
Proof is based on Theorem~\ref{recfin} and is similar to the proof of
Corollary~\ref{pol2}.
Let $P_\mu^{(l)}(\lambda) = \dim \nabla^r_n(\cpf(\xi^t)^{(l)}\otimes \sgr)^{\xi-\mu}$ 
with $\xi_i - \xi_{i+1} = \lambda_i$. 
Then it satisfy the truncated analog of relation~\eqref{recwt} 
with only first $l$ summands in the right hand side for $P_{\mu,\nu}$.

First step is to show that $P_\mu^{(l)}(\lambda)$ 
is a polynomial in $\lambda_k$ of degree $\mu^k$
when $\lambda = \lambda_k\omega_k + \dots \lambda_{r-1}\omega_{r-1}$
and $\lambda_k > l\mu^k+l$ for $i \ge k$. Here the relation is simplified
as 
$$P_\mu^{(l)}(\lambda) = 
\sum_{\nu \in Q^+} P_{\mu,\nu}^{(l)}(\lambda) \quad \mbox{with}
$$
$$
P_{\mu,\nu}^{(l)}(\lambda) =
P_\nu^{(l)}(\omega_k) \cdot  P_{\mu-\nu}^{(l)}(\lambda-\omega_k) + 
\sum_{i=2}^{l} P_{\nu-\alpha_k}^{(l)}(i\omega_k-\alpha_k)
\cdot P_{\mu-\nu}^{(l)}(\lambda - i\omega_k),
$$
so the statement follows by induction on $\mu$.

Second step is to 
prove the same statement for an arbitrary 
$\lambda$ with $\lambda_k > l\mu^k+l$ 
by induction on $\lambda_1 + \dots + \lambda_{k-1}$ using the truncated version of~\eqref{recwt}.
\end{proof}

\end{document}